\newtheorem{theorem}{Theorem}
\newtheorem{lemma}[theorem]{Lemma}
\newtheorem{corollary}[theorem]{Corollary}
\theoremstyle{definition}
\newtheorem{definition}[theorem]{Definition}
\theoremstyle{remark}
\newtheorem{remark}[theorem]{Remark}
\numberwithin{equation}{section}
\DeclareMathAlphabet{\matheur}{U}{eur}{m}{n}
\newcommand{\m}{\mathrm{m}}
\newcommand{\C}{\mathbb{C}}
\newcommand{\R}{\mathbb{R}}
\newcommand{\Q}{\mathbb{Q}}
\newcommand{\N}{\mathbb{N}}
\newcommand{\Z}{\mathbb{Z}}
\newcommand{\im}{\mathop{\mathrm{Im}}} 
\newcommand{\di}{\mathrm{d}}
\newcommand{\sign}{\mathrm{sign}}
\renewcommand\d{{\mathrm d}}
\renewcommand{\Im}{\operatorname{Im}}
\renewcommand{\Re}{\operatorname{Re}}
\mathchardef\pFcomma=\mathcode`, 
\newcommand*\pFq[5]{%
  \begingroup
  \begingroup\lccode`~=`,
    \lowercase{\endgroup\def~}{\pFcomma\mkern\pFqskip}%
  \mathcode`,=\string"8000
  {}_{#1}F_{#2}\biggl(\genfrac..{0pt}{}{#3}{#4} \,\,\bigg| \,\, #5\biggr)%
  \endgroup
}
\renewcommand{\Im}{\operatorname{Im}}
\renewcommand{\Re}{\operatorname{Re}}
\renewcommand{\d}{\mathrm d}
\begin{document}
\title[Mahler measures of a family of non-tempered polynomials]{Mahler measures of a family of non-tempered polynomials and Boyd's conjectures}

\author{Yotsanan Meemark}
\address{Department of Mathematics and Computer Science, Faculty of Science, Chulalongkorn University, Bangkok, Thailand 10330} \email{yzm101@yahoo.com}

\author{Detchat Samart}
\address{Department of Mathematics, Faculty of Science, Burapha University, Chonburi, Thailand 20131} \email{petesamart@gmail.com}



\date{\today}

\maketitle
\begin{center}
\textit{In memory of John Tate}
\end{center}
\begin{abstract}
We prove an identity relating Mahler measures of a certain family of non-tempered polynomials to those of tempered polynomials. Evaluations of Mahler measures of some polynomials in the first family are also given in terms of special values of $L$-functions and logarithms. Finally, we prove Boyd's conjectures for conductor $30$ elliptic curves using our new identity, Brunault-Mellit-Zudilin's formula and additional functional identities for Mahler measures.
\end{abstract}

\section{Introduction}\label{S:intro}
The (logarithmic) Mahler measure of a nonzero $d$-variate Laurent polynomial $P(x_1,\ldots,x_d)\in \C[x_1^{\pm 1},\ldots,x_d^{\pm 1}]$ is defined by 
\begin{equation*}
\m(P)=\int_0^1 \cdots \int_0^1 \log |P(e^{2\pi i \theta_1},\ldots,e^{2\pi i \theta_d})| \d\theta_1 \cdots \d\theta_d.
\end{equation*} 
Before stating the problems to be investigated in this article, let us recall the definition of a tempered polynomial in two variables \cite[Sect. III]{RV}. 
\begin{definition}\label{D:tempered}
Let $\displaystyle P=\sum_{(m,n)\in \Z^2}c_{(m,n)}x^m y^n\in \C[x^{\pm 1},y^{\pm 1}]$ and let $\Delta(P)$ be the Newton polygon of $P$. For each side $\tau$ of $\Delta(P)$, we denote the lattice points on $\tau$ (enumerated clockwise) by $\tau(k),k=0,1,2,\ldots $. Then we associate to $\tau$ the univariate polynomial 
\begin{equation*}
P_{\tau}(t) = \sum_{k=0}^\infty c_{\tau(k)}t^k.
\end{equation*}
(Note that this is a finite sum since only a finite number of $c_{\tau(k)}$ are non-zero.) The polynomial $P$ is said to be \textit{tempered} if, for every side $\tau$ of $\Delta(P)$, the zeroes of $P_\tau(t)$ are roots of unity. Otherwise, $P$ is \textit{non-tempered}.
\end{definition}
In this paper, we mainly study Mahler measures of the following $2$-parametric family of Laurent polynomials:
\begin{equation}\label{E:Pac}
P_{a,c}(x,y)=a\left(x+\frac 1x\right)+\left(y+\frac 1y\right)+c.
\end{equation}
It is clear from Definition~\ref{D:tempered} that, for $a\ne 0$, $P_{a,c}(x,y)$ is tempered if and only if $|a|=1$. In his seminal paper \cite{Boyd}, Boyd verified numerically that for many integral values of $k$
\begin{equation}\label{E:E}
\m(P_{1,k})\stackrel{?}=c_k L'(\widetilde{E}_k,0),
\end{equation}
where $c_k\in\Q^\times$, $\widetilde{E}_k$ is the elliptic curve corresponding to $P_{1,k}=0$ and $A \stackrel{?}= B$ means the two quantities are equal to at least $25$ decimal places. The numerical values of $\m(P_{1,k})$ and $L'(\widetilde{E}_k,0)$ can be computed rapidly with high precision using standard computer algebra systems such as \texttt{Mathematica} and \texttt{Magma}, yet the identity \eqref{E:E} is notoriously difficult to prove in general. In \cite{LSZ}, Lal\'{i}n, Zudilin, and the second author establish an identity relating $\m(P_{1,k})$, for $0<k<4$, to `half-Mahler' measures of $P_{a,c}$, where $a$ and $c$ are algebraic expressions of $k$ and use it to prove Boyd's conjecture
\begin{equation}\label{E:21}
\m(P_{1,3})=2L'(\widetilde{E}_3,0),
\end{equation}
where $\widetilde{E}_3$ has conductor $21$. The (conjectural) equation \eqref{E:E} is in fact an instance of a more general conjecture, namely the \textit{Bloch-Beilinson conjecture}, which predicts a deep connection between regulators and $L$-functions associated to algebraic varieties. 
The link between this conjecture and Mahler measures was first observed by Deninger \cite{Deninger} and was examined extensively by Boyd and Rodriguez Villegas \cite{Boyd,RV}. Another example of tempered polynomials is the family
\begin{equation*}
Q_k = (1+x)(1+y)(x+y)-kxy.
\end{equation*}
Boyd \cite[Tab.~2]{Boyd} found that $\m(Q_k)$ appears to satisfy an identity analogous to \eqref{E:E} for many $k\in \mathbb{Z}$. Again, due to the limitedness of the known techniques, only a handful of these identities have been proven rigorously, as shown in Table~\ref{Ta:Q}.


Temperedness of a two-variable polynomial has certain $K$-theoretic interpretation which potentially leads to a conjecture like \eqref{E:E}. In fact, if $P\in \mathbb{Q}[x,y]$ is a tempered polynomial defining an elliptic curve $E$, then $\m(P)$ is expressible in terms of a regulator integral
\begin{equation*}
r(\{x,y\})[\gamma] = \int_\gamma \log |x| d\arg(y) - \log |y|d\arg(x),
\end{equation*}
where $\gamma$ is a path on $E$. (For more details, the reader is referred to \cite{Deninger,RV}.) Non-tempered polynomials, on the other hand, are less well understood. Boyd's experiment showed that Mahler measures of some non-tempered polynomials in two variables are (conjecturally) $\Q$-linear combinations of logarithms and $L$-values. For example, he found 
\begin{equation}\label{E:W}
\m(y^2+kxy-x^3-bx)\stackrel{?}= \frac{1}{4}\log |b|+r_{k,b} L'(E_{k,b},0)
\end{equation}
for some $k,b\in \Z$ and $r_{k,b}\in\Q^\times$ and hypothesized that the logarithmic term arises from the Mahler measure of $P_\tau(t)=t-b$, which corresponds to a side of the Newton polygon of $y^2+kxy-x^3-bx$. The first proven formula in this family (with $k=3$ and $b=1$) was given recently by Lal\'{i}n and Ramamonjisoa \cite{LR}. Examples of proven Mahler measure formulas for non-tempered polynomials involving logarithms and $L$-values can be found in \cite{LM,Giard}. Due to the sparsity of the known results, it is of great importance to gain further examples of non-tempered polynomials whose Mahler measures encode interesting arithmetic information like \eqref{E:W}.

In the last section of \cite{LSZ}, the following identity is stated without proof: for every $a\ge 1$
\begin{equation*}
\frac{3}{2} \m(P_{a,a^2-1})=\m(Q_{a^2-1})+\log a.
\end{equation*}
The primary goal of this paper is to give a proof of an extended version of this statement.
\begin{theorem}\label{T:Main}
If $a\in [1,\infty)\cup \{\sqrt{-r} \mid r\in (0, \infty)\}$, then the following identity holds:
\begin{equation}\label{E:Main}
\frac{3}{2} \m(P_{a,a^2-1})=\m(Q_{a^2-1})+\log |a|.
\end{equation}
\end{theorem}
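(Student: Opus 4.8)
The plan is to reduce both Mahler measures to one-dimensional integrals by Jensen's formula and then to identify the resulting integrals through the elliptic curves defined by $P_{a,a^2-1}=0$ and $Q_{a^2-1}=0$, which are matched precisely by the constraint $c=k=a^2-1$. I would begin by settling the base case $a=1$: since $P_{1,0}(x,y)\,xy=(x+y)(xy+1)$ and $Q_0=(1+x)(1+y)(x+y)$, both polynomials factor into linear forms of Mahler measure zero, so $\m(P_{1,0})=\m(Q_0)=0$ and \eqref{E:Main} holds trivially at $a=1$. The idea is then to prove that the two sides agree for every admissible $a$ by showing that they have the same derivative in $a$ along each of the two rays $[1,\infty)$ and $\{\sqrt{-r}:r>0\}$, with $a=1$ serving as the anchor on the first ray.

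For the left side, write $P_{a,a^2-1}=\tfrac1y\bigl(y^2+Ay+1\bigr)$ with $A=a(x+1/x)+a^2-1$. Applying Jensen's formula in $y$ collapses the double integral: the roots satisfy $y_+y_-=1$, exactly one lies inside $|y|=1$, and one obtains
\begin{equation*}
\m(P_{a,a^2-1})=\frac{1}{2\pi}\int_{\{\theta\,:\,A>2\}}\operatorname{arccosh}\!\Bigl(\tfrac{A}{2}\Bigr)\,\d\theta,\qquad A=2a\cos\theta+a^2-1.
\end{equation*}
Differentiating under the integral and evaluating the $y$-integral of $1/P_{a,a^2-1}$ by residues (the single pole inside $|y|=1$) turns $\tfrac{\d}{\d a}\m(P_{a,a^2-1})$ into the elementary integral of $\tfrac{2(\cos\theta+a)}{\sqrt{A^2-4}}$ over the arc $A>2$. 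I would run the identical reduction for $Q_{a^2-1}=(1+x)\bigl(y^2+By+x\bigr)$, where $B=(1+x)-\tfrac{(a^2-1)x}{1+x}$ and the product of the $y$-roots now equals $x$; here $\tfrac{\d}{\d a}\m(Q_{a^2-1})=2a\,\partial_k\m(Q_k)\big|_{k=a^2-1}$ reduces by residues to a contour integral in $x=e^{i\theta}$.

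The crux is to prove that these two one-dimensional derivative integrals coincide after the left one is multiplied by $\tfrac32$, together with the extra $\tfrac1a=\tfrac{\d}{\d a}\log|a|$. I expect to do this by exhibiting an explicit birational map (an isomorphism or a low-degree isogeny) between the two curves and tracking how the integration cycle and the differential $\eta(x,y)=\log|x|\,\d\arg y-\log|y|\,\d\arg x$ transform: the rational factor $\tfrac32$ should arise from comparing the classes $\{x,y\}$ in $K_2$ of the common curve, while $\log|a|$ is the boundary contribution attached to the non-tempered sides of $\Delta(P_{a,a^2-1})$, whose side polynomials ($t+a$ and $1+at$) have the non-root-of-unity zeros $-a,-1/a$ responsible for the non-temperedness. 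A consistency check I would verify at the outset: as $a\to1^+$ the arc $A>2$ shrinks to $\theta=0$ and the normalized derivative tends to $1$, so the left side has slope $\tfrac32$ at $a=1$, forcing the companion value $\partial_k\m(Q_k)\big|_{k=0}=\tfrac14$.

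The main obstacle is exactly this matching step. The $P$-integral is naturally real, its integrand depending only on $x+1/x=2\cos\theta$, whereas the $Q$-integral is a genuinely complex contour integral, so the substitution relating them must be constructed carefully and its effect on the branch of the square root and on the subdomain supporting each integrand controlled explicitly; producing the clean factor $\tfrac32$ and the exact term yielding $\log|a|$ is where the real work lies. Finally, to pass from real $a\in[1,\infty)$ to imaginary $a=\sqrt{-r}$ I would redo the root-location analysis (which root lies inside $|y|=1$ changes), treating the two rays separately and anchoring the imaginary ray either by continuity or by analytic continuation of the closed forms obtained on the real ray.
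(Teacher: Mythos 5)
Your setup coincides with the paper's: Jensen's formula in $y$ (using $y_+y_-=1$ to isolate the root outside the unit circle), differentiation under the integral to get $\frac{\di}{\di a}\m(P_{a,a^2-1})=\frac{2}{\pi}\Re\int_{-1}^{1}\frac{t+a}{\sqrt{(2at+a^2-3)(2at+a^2+1)}}\frac{\di t}{\sqrt{1-t^2}}$, the anchor $G(1)=H(1)=0$, and a separate treatment of the imaginary ray. But the crux --- proving $\frac32 H'(a)=G'(a)+\frac1a$ --- is exactly the step you defer, and the mechanism you propose for it is unlikely to deliver. The paper does not match the two derivative integrals by a birational map and a comparison of the classes $\{x,y\}$ in $K_2$; it converts both into complete elliptic integrals (a rational quadratic substitution in $t$ for the $P$-side, and a cubic hypergeometric transformation of Bertin--Zudilin's formula $G'(a)=\frac{2a}{a^2+3}\,{}_2F_1(\frac13,\frac23;1;\cdot)$ for the $Q$-side), after which the required identity is
\begin{equation*}
\Pi(n,m)-\frac{a+3}{3(a+1)}K(m)=\frac{\pi}{3}\,\frac{\sqrt{(a-1)^3(a+3)}}{(a+1)(a-3)},
\end{equation*}
a nontrivial degeneration of the third-kind integral (a known result of Jia, proved by showing the left side satisfies a first-order ODE whose solution is an algebraic function, then fixing the constant). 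This ``period of the third kind equals algebraic multiple of $\pi$'' phenomenon is where the elementary term $\frac1a$, and hence $\log a$, actually comes from; it is not a tame-symbol or Newton-polygon boundary contribution, and a cycle/$K_2$ comparison of the two regulator integrals would at best explain the factor $\frac32$ up to such an elementary discrepancy, not produce it. As written, your plan replaces the one genuinely hard step with a heuristic.

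Two further concrete problems. First, your $Q$-side reduction is not set up correctly: $Q_k=(1+x)(1+y)(x+y)-kxy$ expands in $y$ as $(1+x)y^2+((1+x)^2-kx)y+x(1+x)$, so the product of the $y$-roots is $x$, not $1$, and Jensen's formula does not collapse as cleanly as on the $P$-side; the paper avoids this entirely by quoting the closed hypergeometric form for $\frac{\di}{\di k}\m(Q_k)$. Second, anchoring the imaginary ray ``by continuity or analytic continuation'' from $[1,\infty)$ cannot work: the two rays meet only at $a=0$, which is excluded and degenerate, and $\m(P_{a,a^2-1})$ is not analytic in $a$ across the locus where roots cross the unit circle. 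The paper instead rewrites $2\m(P_{\sqrt{-r},-r-1})=\m(S_r)+\log r$ via the product $P_{a,a^2-1}P_{-a,a^2-1}$, runs the derivative comparison for $\m(S_r)$ (needing a second elliptic-integral identity of the same type, proved again by an ODE argument), and determines the constant of integration from the asymptotics $\m(S_r),\m(Q_{-r-1})=\log r+O(1/r)$ as $r\to\infty$. You would need some such independent determination of the constant on the imaginary ray.
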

This result, which is proven in Section~\ref{S:functional}, gives a direct connection between Mahler measures of a non-tempered family and those of a tempered family. Note also that $Q_{a^2-1}=0$ and $P_{a,a^2-1}=0$ generically define the same elliptic curve (up to isomorphism), which can be written in a Weierstrass form as
\begin{equation*}
E_a : Y^2 = X\left(X^2+\frac{(a^4-6a^2-3)}{4}X+a^2\right).
\end{equation*}
Using Theorem \ref{T:Main} and the proven identities in Table~\ref{Ta:Q}, we immediately obtain some new formulas  for $\m(P_{a,c})$ which are analogous to \eqref{E:W}.

\begin{table}[ht]\label{Ta:Q}
\centering \def\arraystretch{1.1}
    \begin{tabular}{ | c | c | c | c |}
    \hline
    $a$ & Conductor of $E_a$ & $\m(Q_{a^2-1})/L'(E_a,0)$ & Proven by\\ \hline
    $\sqrt{3},\sqrt{-3}$ & $36$ & $1/2, 2$ & Rodriguez Villegas \cite{RV}  \\ \hline
    $\sqrt{2},\sqrt{8},\sqrt{-7}$ & $14$ & $1,6,10$ & Mellit \cite{Mellit} \\ \hline
    $\sqrt{5},\sqrt{-1}$ & $20$ & $2,3$ &  Rogers-Zudilin \cite{RZ} \\ \hline
    \end{tabular}
\caption{Proven formulas for $\m(Q_{a^2-1})$}
\label{T2}
\end{table}

\begin{corollary}\label{Co:Pac}
The following evaluations are true:
\begin{align*}
\m(P_{\sqrt{8},7})&=4 L'(E_{\sqrt{8}},0)+\log 2 ,\\
\m(P_{\sqrt{5},4})&=\frac 43 L'(E_{\sqrt{5}},0)+\frac 13\log 5 ,\\
\m(P_{\sqrt{3},2})&=\frac 13 L'(E_{\sqrt{3}},0)+\frac 13\log 3 ,\\
\m(P_{\sqrt{2},1})&=\frac 23 L'(E_{\sqrt{2}},0)+\frac 13\log 2 ,\\
\m(P_{\sqrt{-1},-2})&=2 L'(E_{\sqrt{-1}},0),\\
\m(P_{\sqrt{-3},-4})&=\frac 43 L'(E_{\sqrt{-3}},0)+\frac 13\log 3 ,\\
\m(P_{\sqrt{-7},-8})&=\frac{20}{3} L'(E_{\sqrt{-7}},0)+\frac 13\log 7.
\end{align*}
\end{corollary}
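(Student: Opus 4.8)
The plan is to derive each of the seven evaluations as an immediate consequence of Theorem~\ref{T:Main} combined with the proven formulas collected in Table~\ref{Ta:Q}; no new analytic input is needed, so the proof is essentially a bookkeeping exercise. First I would verify that every parameter $a$ appearing in the corollary lies in the admissible set $[1,\infty)\cup\{\sqrt{-r}\mid r\in(0,\infty)\}$ to which Theorem~\ref{T:Main} applies. The four real parameters $a=\sqrt{2},\sqrt{3},\sqrt{5},\sqrt{8}$ all satisfy $a\ge 1$, and the three imaginary parameters $a=\sqrt{-1},\sqrt{-3},\sqrt{-7}$ are of the form $\sqrt{-r}$ with $r\in\{1,3,7\}$; hence the hypothesis of Theorem~\ref{T:Main} is met in every case.

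Next I would solve the identity \eqref{E:Main} for the Mahler measure of interest, obtaining
\[
\m(P_{a,a^2-1})=\frac{2}{3}\bigl(\m(Q_{a^2-1})+\log|a|\bigr).
\]
For each listed $a$ one computes $a^2-1\in\{1,2,4,7,-2,-4,-8\}$, and these are exactly the values of $c=a^2-1$ recorded in Table~\ref{Ta:Q}. Thus $\m(Q_{a^2-1})$ may be read off directly as the stated rational multiple of $L'(E_a,0)$, and substituting it into the displayed identity produces the $L$-value part of each formula at once.

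The only point demanding a little care is the logarithmic term $\log|a|$. For the real parameters $a=\sqrt{n}$ one has $\log|a|=\tfrac12\log n$, so multiplying by $\tfrac23$ reproduces the listed coefficients; for example $\tfrac23\cdot\tfrac12\log 8=\log 2$ (since $8=2^3$) and $\tfrac23\cdot\tfrac12\log 5=\tfrac13\log 5$. For the imaginary parameters $a=\sqrt{-r}=\ii\sqrt{r}$ one must use $|a|=\sqrt{r}$, giving $\log|a|=\tfrac12\log r$; this vanishes when $r=1$, which explains why the evaluation of $\m(P_{\sqrt{-1},-2})$ carries no logarithm, and yields $\tfrac13\log 3$ and $\tfrac13\log 7$ in the two remaining cases. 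Since the whole argument is a substitution followed by elementary simplification, there is no genuine obstacle here; the entire mathematical weight of the corollary rests on Theorem~\ref{T:Main} and on the four external results cited in Table~\ref{Ta:Q}.
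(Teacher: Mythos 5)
Your proposal is correct and matches the paper exactly: the corollary is stated there as an immediate consequence of Theorem~\ref{T:Main} and Table~\ref{Ta:Q}, and your substitution $\m(P_{a,a^2-1})=\frac{2}{3}\bigl(\m(Q_{a^2-1})+\log|a|\bigr)$ together with the checks $\log|a|=\frac{1}{2}\log|a^2|$ (vanishing for $a=\sqrt{-1}$) reproduces all seven formulas, including the coefficient $\frac{2}{3}\cdot\frac{3}{2}\log 2=\log 2$ for $a=\sqrt{8}$.
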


Many recent results relating Mahler measures of two-variate polynomials to elliptic curve $L$-values are accomplished using elegant formulas of Brunault, Mellit and Zudilin \cite{Zudilin,Brunault}. However, these formulas are applicable only for finitely many elliptic  curves over $\Q$, which admit a modular unit parametrization \cite{Brunault2}. Indeed, the motivation for studying the family $P_{a,c}$ is the existence of modular units parametrizing $P_{\sqrt{7},3}=0$, which is equivalent to a classical result of Ramanujan. This eventually leads to a proof of \eqref{E:21}. In Section~\ref{S:munit} and Section~\ref{S:regulator}, we use a similar approach to tackle Boyd's conjectures for elliptic curves of conductor $30$ \cite[Tab.~2]{Boyd}, including
\begin{align}
  \m(Q_3) &= L'(E_2,0), \label{E:mQ3} \\
  \m(Q_9) &= 3L'(E_{\sqrt{10}},0), \label{E:mQ9}\\
  \m(Q_{24}) &= 5L'(E_5,0).  \label{E:mQ24}
\end{align}
Note that there is only one isogeny class of conductor $30$ elliptic curves, so the $L$-values on the right-hand sides of \eqref{E:mQ3}, \eqref{E:mQ9} and \eqref{E:mQ24} all coincide.
To prove these formulas, we first use a newly-discovered modular unit parametrization for $P_{2,3}=0$ and Brunault-Mellit-Zudilin's formula to establish a formula for $\m(P_{2,3})$.
\begin{theorem}\label{T:P23}
The following formula holds:
\begin{equation}\label{E:P23}
\m(P_{2,3}) = \frac{2}{3}(L'(E_2,0)+\log 2).
\end{equation}
\end{theorem}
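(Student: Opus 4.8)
The plan is to realize $\m(P_{2,3})$ as a regulator integral on the elliptic curve $E_2$ and then evaluate that integral via the modular unit parametrization together with the Brunault--Mellit--Zudilin formula. First I would apply Jensen's formula with respect to $y$. Writing $P_{2,3}=(y^2+Ay+1)/y$ with $A=2(x+1/x)+3$, the two $y$-roots satisfy $\rho_1\rho_2=1$, so the inner integral over $|y|=1$ collapses to the branch with $|y|\ge 1$, and one is left with an integral over the single arc $\gamma\subset\{|x|=1\}$ on which $|y|\ge 1$ (equivalently $A>2$). Since $\log|x|=0$ there, this is a regulator integral in the sense of the form introduced in the introduction:
\begin{equation*}
\m(P_{2,3}) = \frac{1}{2\pi}\int_\gamma \log|y|\,d\arg x = -\frac{1}{2\pi}\,r(\{x,y\})[\gamma].
\end{equation*}
Thus the computation is reduced to evaluating the regulator of the symbol $\{x,y\}$ along a path whose endpoints are the branch points $A=2$ (where $y=-1$).

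The crux is to evaluate $r(\{x,y\})[\gamma]$ using modularity. I would fix a modular parametrization of the genus-one curve $P_{2,3}=0$ by a suitable congruence subgroup and show that, after pullback, both coordinates $x(\tau)$ and $y(\tau)$ are \emph{modular units}, i.e.\ products of Siegel units whose divisors are supported on the cusps; this is the newly-discovered parametrization referred to above, and verifying it amounts to matching $q$-expansions and checking that the proposed Siegel-unit products satisfy $P_{2,3}=0$. Once $x$ and $y$ are written as such products, the pulled-back integrand decomposes into a $\Q$-linear combination of forms $\eta(g_a,g_b)$ attached to pairs of Siegel units, and $\gamma$ must be identified with a geodesic joining two cusps in the upper half-plane. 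This last identification is the main technical obstacle: the reduction in the first step produces a path cut out by the analytic condition $|y|=1$, and one must verify that its endpoints (the points with $y=-1$) lift to cusps rather than interior points, so that the formula is applicable to the correct cuspidal cycle.

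With $x,y$ expressed through Siegel units and $\gamma$ realized as a cuspidal path, I would invoke the Brunault--Mellit--Zudilin formula, which evaluates $\int\eta(g_a,g_b)$ between cusps as an explicit combination of special $L$-values of weight-two modular forms; for our curve this combination should collapse to a rational multiple of $L(E_2,2)$, which I would then convert to $L'(E_2,0)$ through the functional equation for a conductor-$30$ elliptic curve. The constant $\log 2$ I expect to arise as the non-tempered boundary contribution: because $|a|=2\neq 1$ the symbol $\{x,y\}$ fails to be integral, and its tame symbols at the cusps (equivalently the Mahler measure of the side polynomial of $P_{2,3}$ along the $x$-direction) produce precisely $\log|a|=\log 2$, matching the heuristic of \eqref{E:W}. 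Collecting the rational factors coming from the degree of the parametrization, the Siegel-unit multiplicities, and the functional equation should yield the common prefactor $\tfrac{2}{3}$ in front of $L'(E_2,0)+\log 2$.
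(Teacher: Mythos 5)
Your overall strategy (Jensen's formula to a regulator integral, modular unit parametrization of $P_{2,3}=0$, Brunault--Mellit--Zudilin, functional equation) is the same as the paper's, but there is a genuine gap at the step you yourself flag as the main technical obstacle. The endpoints of the arc $\gamma$ --- the points with $y=-1$, namely $x=(-1\pm\sqrt{-15})/4$ --- do \emph{not} lift to cusps. Under the parametrization they correspond to the quadratic (CM) points $\tau_1=\frac{-15+\sqrt{-15}}{60}$ and $\tau_4=\frac{15+\sqrt{-15}}{60}$ in the interior of $\mathcal{H}$, so the verification you propose would fail and the BMZ formula is not directly applicable to $\int_{\tau_1}^{\tau_4}$. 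The missing idea is the path-deformation argument of Lemma~\ref{L:cusp}: one splits $[\tau_1,\tau_4]$ into geodesic pieces on which $|\tilde{x}(\tau)|=1$ and $\tilde{y}(\tau)\in\R$ (proved via the Atkin--Lehner involutions $W_6$, $W_{30}$ acting as complex conjugation on those arcs, Lemma~\ref{L:paths}), uses the resulting vanishing of $\eta$ against constants to replace $\tilde{x}$ by the genuine modular unit $x_0=\tilde{x}/2$, and then applies $W_{10}$ (which swaps $\tau_1$ and $\tau_4$ and sends $0$ to $3/10$) to convert the CM-point path into the cuspidal path from $3/10$ to $i\infty$. Without some such symmetrization your plan stalls exactly where you say it is hardest.

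Two smaller points. First, your account of the $\log 2$ as a ``tame symbol at the cusps'' is only the heuristic from \eqref{E:W}; in the actual computation it emerges because the weight-two form $f$ produced by BMZ is not a multiple of the newform $f_{30}$ but contains an Eisenstein component whose $L$-value at $s=2$ equals $-\frac{4}{3}\pi^2\log 2$. Second, the integrality issue you mention is precisely why the constant $2$ in $\tilde{x}=2\,\eta(2\tau)\eta(6\tau)\eta(10\tau)\eta(30\tau)/(\eta(\tau)\eta(3\tau)\eta(5\tau)\eta(15\tau))$ must be stripped off before BMZ applies, and justifying that replacement again uses the reality of $\tilde{y}$ on the chosen sub-geodesics; so this is not a separate bookkeeping step but is entangled with the path argument you are missing.
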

It is clear that \eqref{E:mQ3} follows from \eqref{E:Main} and \eqref{E:P23}. The remaining two formulas can be deduced by comparing the Mahler measures in the level of regulator via isogenies or isomorphisms between elliptic curves. The details of the proof will be given in Section~\ref{S:regulator}. In the end, we are able to settle all conjectural identities of Boyd in conductor $30$.
\begin{theorem}\label{T:Boyd}
The formulas \eqref{E:mQ3}, \eqref{E:mQ9} and \eqref{E:mQ24} are true.
\end{theorem}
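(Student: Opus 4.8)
The plan is to dispatch \eqref{E:mQ3} directly from the results already in hand and then reduce the other two identities to it through a regulator comparison. For \eqref{E:mQ3}, observe that $Q_3=Q_{a^2-1}$ and $P_{2,3}=P_{a,a^2-1}$ for $a=2\in[1,\infty)$, so Theorem \ref{T:Main} applies and gives
\begin{equation*}
\frac{3}{2}\m(P_{2,3})=\m(Q_3)+\log 2.
\end{equation*}
Inserting $\m(P_{2,3})=\frac{2}{3}(L'(E_2,0)+\log 2)$ from Theorem \ref{T:P23} yields $\tfrac{3}{2}\m(P_{2,3})=L'(E_2,0)+\log 2$, whence $\m(Q_3)=L'(E_2,0)$, which is \eqref{E:mQ3}.

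For \eqref{E:mQ9} and \eqref{E:mQ24} I would first record that $Q_9=Q_{a^2-1}$ with $a=\sqrt{10}$ and $Q_{24}=Q_{a^2-1}$ with $a=5$, and that the three curves $E_2$, $E_{\sqrt{10}}$ and $E_5$ all have conductor $30$. Since there is a single isogeny class of conductor $30$ elliptic curves, their $L$-functions coincide; writing $L'(E,0)$ for the common value, it suffices to prove $\m(Q_9)=3L'(E,0)$ and $\m(Q_{24})=5L'(E,0)$, that is, to identify the rational proportionality constants $3$ and $5$.

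The tool for this is the regulator. Since $Q_k$ is tempered, each $\m(Q_k)$ for $k\in\{3,9,24\}$ is a pure regulator value $r(\{x,y\})[\gamma]$, up to the standard normalizing factor, attached to the symbol $\{x,y\}\in K_2(E_a)\otimes\Q$ and a generator $\gamma$ of the relevant part of $H_1(E_a,\Z)$, in the Deninger--Rodriguez Villegas framework recalled in the introduction. Fixing isogenies $E_{\sqrt{10}}\to E_2$ and $E_5\to E_2$, I would transport the symbols $\{x,y\}$ attached to $Q_9$ and $Q_{24}$ into $K_2(E_2)\otimes\Q$ and, using the Steinberg relations together with the explicit defining equations, express them as rational multiples of the symbol governing $\m(Q_3)$; simultaneously the isogenies act on the integration cycles and on the holomorphic differential by computable factors. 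Combining the two effects would show $\m(Q_9)=3\,\m(Q_3)$ and $\m(Q_{24})=5\,\m(Q_3)$, and substituting the value $\m(Q_3)=L'(E,0)$ already obtained then gives \eqref{E:mQ9} and \eqref{E:mQ24}.

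The main obstacle is precisely this last bookkeeping. Computing the isogenies explicitly, writing the three $K_2$-symbols in a common basis, and matching the induced action on $H_1$ against the correct periods so that the scalars emerge as exactly $3$ and $5$ -- rather than as some a priori unknown rationals -- is delicate: a misassigned orientation of an integration cycle, or a stray factor equal to an isogeny degree, would corrupt the constants. Once the proportionality constants are pinned down rigorously, all three formulas follow, completing the proof of Theorem \ref{T:Boyd}.
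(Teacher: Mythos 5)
Your derivation of \eqref{E:mQ3} is correct and is exactly the paper's argument: Theorem~\ref{T:Main} at $a=2$ combined with Theorem~\ref{T:P23} gives $\m(Q_3)=L'(E_2,0)$. The problem is the second half. You reduce \eqref{E:mQ9} and \eqref{E:mQ24} to showing $\m(Q_9)=3\,\m(Q_3)$ and $\m(Q_{24})=5\,\m(Q_3)$, and you propose to get these ratios by transporting the symbols $\{x,y\}$ through isogenies $E_{\sqrt{10}}\to E_2$ and $E_5\to E_2$ and then ``expressing them as rational multiples of the symbol governing $\m(Q_3)$.'' That step is the entire content of the theorem, and as stated it has a genuine gap: the pushed-forward symbol and the symbol attached to $Q_3$ are two elements of $K_2(E_2)\otimes\Q$, and there is no a priori reason they are proportional unless the relevant part of $K_2(E_2)\otimes\Q$ has rank one --- which is precisely the (open) Beilinson-type prediction. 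To make your plan work you would need to exhibit an explicit $K$-theoretic relation between the two symbols, i.e.\ write their difference (suitably scaled) as a sum of Steinberg elements $\{f,1-f\}$, and you do not produce one; you acknowledge the difficulty but leave it unresolved, so the constants $3$ and $5$ are never actually pinned down.

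The paper takes a different and fully explicit route to these ratios, avoiding any appeal to the rank of $K_2$. First, Lal\'in's proven functional identity \eqref{E:g2}, specialized at $p=1/2$ and $p=2$, yields the linear relations \eqref{E:gcomb}: $g(-8/3)-g(-1/3)=g(24)-g(9)=g(9)-g(3)$. Second, Theorem~\ref{T:newg} supplies the missing link $g(-8/3)-g(-1/3)=\tfrac23 g(9)$; crucially, the three curves $Q_9=0$, $Q_{-1/3}=0$, $Q_{-8/3}=0$ are all \emph{isomorphic} to a single model $C_9$ (via the maps $\varphi_1,\varphi_2$), the divisors $(x)\diamond(y)$ are computed as explicit combinations of torsion points, and the required relation in $K_2$ is established by producing concrete functions $f_1,f_2$ on $C_9$ whose Steinberg elements $(f_i)\diamond(1-f_i)$ realize \eqref{E:sim}. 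Combining these gives $g(3)=\tfrac13 g(9)=\tfrac15 g(24)$ exactly, and the isogeny class of conductor $30$ is invoked only to identify the three $L$-values, as you also do. So your outline points in a plausible direction, but without an explicit Steinberg relation (or the functional identities the paper uses) the proportionality constants remain unproven and the argument is incomplete.
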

It is worth mentioning that Rogers and Yuttanan made some observations about the conductor $30$ conjectures in \cite{RY} by expressing the $L$-value in terms of certain lattice sums. Using Theorem~\ref{T:Boyd}, we are able to confirm all of their related conjectures. For example, we have 
\begin{align*}
\m(Q_3)=\frac{15}{2\pi^2}\left(F\left(2,\frac53\right)-\frac14F(2,15)\right),
\end{align*}
where 
\begin{align*}
F(b,c)&= (1+b+c+bc)^2 \\
&\quad \times\sum_{n_i=-\infty}^\infty \frac{(-1)^{n_1+n_2+n_3+n_4}}{((6n_1+1)^2+b(6n_2+1)^2+c(6n_3+1)^2+bc(6n_4+1)^2)^2}.
\end{align*}

\section{Proof of Theorem~\ref{T:Main}}\label{S:functional}

We follow an approach of Bertin and Zudilin \cite{BZ} in proving Theorem~\ref{T:Main}. The crucial idea is to compare the derivatives of $\m(P_{a,a^2-1})$ and $\m(Q_{a^2-1})$ with respect to the real parameter $a$ (or $r$ if $a=\sqrt{-r}$). These quantities turn out to be expressible in terms of elliptic integrals, which can be manipulated quite easily by changing variables. We divide the proof into two parts, depending on whether $a$ is real or purely imaginary.

For the sake of brevity, denote
\begin{align*}
G(a)&= \m(Q_{a^2-1}),\\
H(a)&= \m(P_{a,a^2-1}).
\end{align*}
We will also make use of the following standard notations for complete elliptic integrals:
\begin{align*}
K(z)&=\int_0^1\frac{\d x}{\sqrt{(1-x^2)(1-z^2x^2)}}, \qquad
E(z)=\int_0^1\frac{\sqrt{1-z^2x^2}}{\sqrt{1-x^2}}\d x,
\\
\Pi(n,z)&=\int_0^1\frac{\d x}{(1-nx^2)\sqrt{(1-x^2)(1-z^2x^2)}}.
\end{align*}

\subsection{The real cases}
\begin{lemma}\label{L:dh}
For $a\in (1,3)\cup (3,\infty)$, we have
\begin{equation*}
\dfrac{\di}{\di a}H(a)=\frac{2}{\pi}\Re\left(\frac{a^2+3}{a\sqrt{(a-1)^3(a+3)}}K(m)+\frac{(a+1)(a-3)}{a\sqrt{(a-1)^3(a+3)}}\Pi(n,m)\right),
\end{equation*}
where $m=\sqrt{\frac{16a}{(a-1)^3(a+3)}}$ and $n=\frac{4a}{(a-1)(a+3)}$.
\end{lemma}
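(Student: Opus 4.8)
The plan is to integrate out the variable $y$ by Jensen's formula, reducing $H(a)$ to a single integral in which the $a$-derivative can be computed and then recognized as an elliptic integral. Writing $x=e^{2\pi\ii\theta_1}$, $y=e^{2\pi\ii\theta_2}$ and clearing denominators, $y\,P_{a,a^2-1}=y^2+By+1$ with $B=B(\theta_1)=2a\cos(2\pi\theta_1)+a^2-1\in\RR$. The two roots of $y^2+By+1$ are mutually reciprocal, so for $|B|>2$ exactly one lies outside the unit circle while for $|B|\le2$ both lie on it; since the leading coefficient is $1$, Jensen's formula in $y$ yields $\int_0^1\log|P_{a,a^2-1}(e^{2\pi\ii\theta_1},e^{2\pi\ii\theta_2})|\,\d\theta_2=g(B(\theta_1))$, where $g(B)=\log\big((|B|+\sqrt{B^2-4})/2\big)$ for $|B|\ge2$ and $g(B)=0$ otherwise. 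Hence
\[
H(a)=\int_0^1 g(B(\theta_1))\,\d\theta_1 .
\]

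Next I would differentiate in $a$. On $[-1,1]$ one has $B\ge B(-1)=(a-1)^2-2>-2$, so $\sign(B)=1$ throughout the region $\{B>2\}$, and there $g'(B)=1/\sqrt{B^2-4}$ while $\partial_a B=(B+a^2+1)/a$. Differentiating under the integral sign (legitimate because $g$ vanishes on the moving boundary $|B|=2$, so no boundary term arises, and the remaining singularity of $g'$ is integrable) and substituting $t=\cos(2\pi\theta_1)$, $\d\theta_1=-\d t/(2\pi\sqrt{1-t^2})$, gives an integral over $\{t:B>2\}$. The key simplifications are the factorizations $B+a^2+1=2a(t+a)$ and $B^2-4=4a^2(t-t^*)(t-t_-)$ with $t^*=(3-a^2)/(2a)$ and $t_-=-(a^2+1)/(2a)$, which collapse everything to the elliptic integral
\[
H'(a)=\frac{1}{\pi a}\int_{t^{**}}^{1}\frac{(t+a)\,\d t}{\sqrt{(1-t^2)(t-t^*)(t-t_-)}} ,
\]
whose quartic has the four real roots $1,t^*,-1,t_-$. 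A short case check on where $t^*$ lies shows that the lower limit is $t^{**}=t^*\in(-1,1)$ when $1<a<3$ and $t^{**}=-1$ (with $t^*<-1$) when $a>3$; this is the source of the two cases, and $a=3$ is excluded as the degenerate value $t^*=-1$.

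The remaining task is the standard reduction of this integral to Legendre normal form, integrating between the two largest roots. When $a>3$ those roots are $\pm1$ and the cross-ratio modulus is exactly $m^2=16a/((a-1)^3(a+3))<1$; when $1<a<3$ they are $t^*$ and $1$ and the modulus is the reciprocal $1/m^2>1$. In either case the normalizing prefactor $2/\sqrt{(t_1-t_3)(t_2-t_4)}$ equals $4a/\sqrt{(a-1)^3(a+3)}$. Under the M\"obius substitution carrying the quartic to Legendre form, the linear numerator $t+a$ splits into a constant part, which integrates to $K(\cdot)$, and a rational part of the form $1/(1-nt^2)$, which integrates to $\Pi(n,\cdot)$; matching the two contributions produces the characteristic $n=4a/((a-1)(a+3))$ together with the coefficients $(a^2+3)/a$ and $(a+1)(a-3)/a$. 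For $a>3$ the modulus is below $1$, the integrals are real, and $\Re$ is vacuous; for $1<a<3$ the modulus exceeds $1$, and the reciprocal-modulus transformation rewrites the real integral in terms of $K(m)$ and $\Pi(n,m)$ modulo purely imaginary terms, which is exactly what taking $\Re$ removes. This merges both ranges into the single stated identity.

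I expect the main obstacle to be this last elliptic-integral bookkeeping: selecting the correct reduction entry, tracking the reciprocal-modulus transformation and the precise imaginary terms it introduces (so that the $\Re$ is justified rather than merely plausible), and verifying that the decomposition of $t+a$ gives exactly the coefficients $(a^2+3)/a$ and $(a+1)(a-3)/a$. The only other point needing care is the rigorous justification of differentiating under the integral sign in the presence of the integrable $1/\sqrt{B^2-4}$ singularity and the $a$-dependent domain $\{B>2\}$.
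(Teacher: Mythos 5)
Your proposal is correct and follows essentially the same route as the paper: Jensen's formula in $y$, differentiation under the integral (using $B>-2$ so only the branch $B>2$ contributes), the substitution $t=\cos\theta$, and reduction of the resulting integral over the quartic $(1-t^2)(t-t^*)(t-t_-)$ to $K$ and $\Pi$; your computed modulus $m^2=16a/((a-1)^3(a+3))$, prefactor $4a/\sqrt{(a-1)^3(a+3)}$, and characteristic $n$ all agree with the paper's. The only cosmetic difference is in the last step: the paper writes down one explicit M\"obius substitution $t=\frac{2(a^2-3)x^2-(a-1)(a+3)}{-4ax^2+(a-1)(a+3)}$ valid on both ranges and lets $\Re$ absorb the imaginary contributions, whereas you split into the cases $m^2<1$ and $m^2>1$ and invoke the reciprocal-modulus transformation for $1<a<3$.
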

\begin{proof}
We denote
\begin{align*}
B(x)&:=B_a(x)=a\left(x+\frac{1}{x}\right)+a^2-1,\\ 
\Delta(x)&:=\Delta_a(x)=B(x)^2-4.
\end{align*}
By the quadratic formula, we have
\[y P_{a,a^2-1}(x,y)=y^2+B(x)y+1= (y-y_1(x))(y-y_2(x)),\]
where $y_1(x),y_2(x)=\frac{-B(x)\pm \sqrt{\Delta(x)}}{2}.$ If $\Delta(x)<0$, then $y_1(x)$ and $y_2(x)$ have modulus $1$ and are complex conjugates of each other, since $y_1(x)y_2(x)=1$. Otherwise, we let 
\[y_1(x)=\frac{-B(x)-\sign(B(x))\sqrt{\Delta(x)}}{2},\]
so that $|y_2(x)|<1<|y_1(x)|.$

By Jensen's formula and the fact that $y_1(x)=y_1(x^{-1})$, we obtain
\begin{align*}
H(a)&= \m(P_{a,a^2-1})\\
&=\frac{1}{(2\pi i)^2}\iint_{|x|=|y|=1} \log |P_{a,a^2-1}|\dfrac{\di x}{x}\dfrac{\di y}{y}\\
&=\frac{1}{2\pi i}\int_{|x|=1}\log |y_1(x)| \dfrac{\di x}{x}\\
&=\frac{1}{\pi}\Re\int_0^\pi\log\left(\frac{B\left(e^{i\theta}\right)+\sign\left(B\left(e^{i\theta}\right)\right)\sqrt{\Delta\left(e^{i\theta}\right)}}{2}\right)\di \theta.
\end{align*}
Note that $B\left(e^{i\theta}\right)=2a\cos \theta +a^2-1 > -2$, since $a> 1$. If $-2< B\left(e^{i\theta}\right)<0$, then $\Delta\left(e^{i\theta}\right)<0$, so using 
\[\log\left(\frac{B-\sqrt{\Delta}}{2}\right)+ \log\left(\frac{B+\sqrt{\Delta}}{2}\right)=\log 1=0\]
and $\Re(\log z) =\Re(\log \bar{z})$, one sees that $\Re\left(\log\left(\frac{B\left(e^{i\theta}\right)-\sqrt{\Delta\left(e^{i\theta}\right)}}{2}\right)\right) =0$. Therefore,
\[H(a)=\frac{1}{\pi}\Re\int_0^\pi\log\left(\frac{B\left(e^{i\theta}\right)+\sqrt{\Delta\left(e^{i\theta}\right)}}{2}\right)\di \theta.\]
Simple calculations yield
\begin{equation*}
\dfrac{\di }{\di a} \log \frac{B+\sqrt{\Delta}}{2}=\dfrac{\di}{\di B} \log \left(\frac{B+\sqrt{B^2-4}}{2}\right)\dfrac{\di B}{\di a}=\frac{1}{\sqrt{\Delta}}\left(x+\frac{1}{x}+2a\right).
\end{equation*}
It follows that
\[\dfrac{\di}{\di a}H(a) = \frac{1}{\pi}\Re \int_0^\pi \frac{2(\cos \theta+a)}{\sqrt{\Delta\left(e^{i\theta}\right)}} \di \theta.\]
Then we use the substitution $t=\cos\theta$ to obtain 
\begin{equation}\label{E:dha}
\dfrac{\di}{\di a}H(a) =\frac{2}{\pi}\Re\int_{-1}^1 \frac{t+a}{\sqrt{(2at+a^2-3)(2at+a^2+1)}}\dfrac{\di t}{\sqrt{1-t^2}}.
\end{equation}
Note that the above integral converges for $a\in (1,3)\cup (3,\infty)$. Using the change of variable 
\[t= \frac{2(a^2-3)x^2-(a-1)(a+3)}{-4ax^2+(a-1)(a+3)},\]
one can check easily that 
\begin{align}
t+a &= \frac{a^2+3}{2a}+\frac{(a+1)(a-3)}{2a}\frac{1}{1-nx^2},\label{E:ta}\\
\dfrac{\di t}{\sqrt{(1-t^2)(2at+a^2-3)(2at+a^2+1)}}&= \frac{2}{\sqrt{(a-1)^3(a+3)}}\dfrac{\di x}{\sqrt{(1-x^2)(1-m^2x^2)}},\label{E:dt}
\end{align}
where $m=\sqrt{\frac{16a}{(a-1)^3(a+3)}}$ and $n=\frac{4a}{(a-1)(a+3)}$.
Substituting \eqref{E:ta} and \eqref{E:dt} into \eqref{E:dha} proves the lemma.
\end{proof}

\begin{lemma}\label{L:dg}
For $a\in (1,3)\cup (3,\infty)$, we have
\begin{equation}\label{E:dg}
\dfrac{\di}{\di a}G(a) = \frac{4a}{\pi \sqrt{(a-1)^3(a+3)}}\Re\left( K(m)\right),
\end{equation}
where $m=\sqrt{\frac{16a}{(a-1)^3(a+3)}}$.
\end{lemma}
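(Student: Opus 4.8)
The plan is to mirror the structure of the proof of Lemma~\ref{L:dh}, since $G(a)=\m(Q_{a^2-1})$ is another Mahler measure depending on the single parameter $a$. First I would apply Jensen's formula in the variable $y$ to reduce the double integral defining $\m(Q_{a^2-1})$ to a single integral over $|x|=1$. Writing $Q_{a^2-1}=(1+x)(1+y)(x+y)-(a^2-1)xy$ as a quadratic in $y$, say $Q_{a^2-1}=(1+x)y^2+(Ax)y+x(1+x)$ for an appropriate coefficient $A=A(x)$ collecting the linear-in-$y$ terms, I would factor it as $(1+x)(y-y_1(x))(y-y_2(x))$, isolate the root $y_1(x)$ of modulus $\ge 1$, and obtain
\begin{equation*}
G(a)=\frac{1}{2\pi i}\int_{|x|=1}\log\bigl|(1+x)\,y_1(x)\bigr|\,\frac{\di x}{x},
\end{equation*}
being careful to track the extra $\log|1+x|$ factor coming from the leading coefficient. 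As in Lemma~\ref{L:dh}, when the discriminant of the quadratic is negative the two roots are complex conjugates of equal modulus, so only the real part survives and the contribution of $y_2$ vanishes by the same $\Re(\log z)=\Re(\log\bar z)$ argument together with $y_1y_2=x(1+x)/(1+x)$.

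Next I would differentiate under the integral sign with respect to $a$. Since $Q_{a^2-1}$ depends on $a$ only through the term $-(a^2-1)xy$, the $a$-dependence is particularly simple: the discriminant of the quadratic has the form $\Delta_Q(x)=(Ax)^2-4(1+x)^2x$ where only $A$ carries $a$, and $\frac{\di A}{\di a}=-2a$. The leading-coefficient term $\log|1+x|$ is independent of $a$ and so drops out upon differentiation, which is why the right-hand side of \eqref{E:dg} contains no $\Pi$ term. A short computation paralleling the one for $\frac{\di}{\di a}\log\frac{B+\sqrt\Delta}{2}$ in Lemma~\ref{L:dh} should give $\frac{\di}{\di a}\log|y_1|$ as a multiple of $(\text{something})/\sqrt{\Delta_Q}$. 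Substituting $t=\cos\theta$ then yields an elliptic integral over $t\in[-1,1]$ whose integrand, after simplification, should reduce to a constant times $1/\sqrt{(\text{quartic in }t)}$ with no rational factor surviving in the numerator.

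The key step, and the main obstacle, is to carry out the correct change of variables bringing this $t$-integral into the standard form $K(m)$ with the \emph{same} modulus $m=\sqrt{16a/((a-1)^3(a+3))}$ that appears in Lemma~\ref{L:dh}. This is the crucial point on which the eventual cancellation in Theorem~\ref{T:Main} rests: the moduli must match so that the combination $\frac{3}{2}H'(a)-G'(a)$ can be shown to equal a total derivative of $\log|a|$. I expect the same substitution $t=\frac{2(a^2-3)x^2-(a-1)(a+3)}{-4ax^2+(a-1)(a+3)}$ used in Lemma~\ref{L:dh} (or a closely related one) to do the job, converting the quartic under the square root into $(1-x^2)(1-m^2x^2)$ up to the factor $2/\sqrt{(a-1)^3(a+3)}$ recorded in \eqref{E:dt}. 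Verifying that the numerator collapses to the pure constant $4a$ advertised in \eqref{E:dg}, rather than producing a spurious $1/(1-nx^2)$ contribution of the kind that forced a $\Pi$ term in $H'(a)$, is the delicate bookkeeping that must be checked; once the integrand is in Legendre form the identification with $K(m)$ is immediate.
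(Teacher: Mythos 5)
Your route is genuinely different from the paper's. You propose to derive $\frac{\di}{\di a}G(a)$ from scratch: Jensen's formula in $y$ for the quadratic $(1+x)y^2+((1+x)^2-(a^2-1)x)y+x(1+x)$, differentiation under the integral sign, the substitution $t=\cos\theta$, and a reduction of the resulting quartic to Legendre form. The paper instead does no Mahler-measure computation at all here: it quotes the Bertin--Zudilin formula $\frac{\di}{\di a}G(a)=\frac{2a}{a^2+3}\,{}_2F_1\bigl(\frac13,\frac23;1;\frac{27(a^2-1)^2}{(a^2+3)^3}\bigr)$ and converts it to $K(m)$ via a cubic hypergeometric transformation from Ramanujan's alternative theories (Berndt, Thm.~5.6), with the substitutions $a=(p+2)/p$ for $a>3$ and $a=2p+1$ for $1<a<3$ plus the reciprocal-modulus identity $\Re K(u)=\frac1u\Re K(1/u)$. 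Your approach is more self-contained and explains structurally why no $\Pi$ term appears (the residue in $y$ leaves a constant numerator over $\sqrt{\Delta_Q}$), which the paper's citation-based argument does not illuminate; the paper's approach buys a very short proof at the cost of importing two nontrivial external results.

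However, as written your argument has a gap precisely at the step you flag as ``delicate bookkeeping,'' and one concrete assertion there is wrong: the substitution of Lemma~\ref{L:dh} will \emph{not} carry this integral to Legendre form. The quartic you obtain is $(1-t^2)\bigl(4t^2-4(a^2-1)t+a^4-6a^2+1\bigr)$, whose finite branch points are $t=\frac{a^2\pm 2a-1}{2}$, whereas the quartic in Lemma~\ref{L:dh} has branch points $\frac{3-a^2}{2a}$ and $-\frac{a^2+1}{2a}$; these are different configurations (at $a=2$: $\{7/2,-1/2\}$ versus $\{-1/4,-5/4\}$), so a different fractional-linear substitution must be found, and the fact that the resulting modulus coincides with $m=\sqrt{16a/((a-1)^3(a+3))}$ (it does --- the cross-ratios agree, e.g.\ both equal $5/32$ at $a=2$ --- because the two curves are isomorphic) still has to be exhibited explicitly, along with the constant $4a$. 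You also do not address the two regimes $1<a<3$ and $a>3$, where the branch-point ordering changes, $m$ crosses $1$, and the $\Re$ in \eqref{E:dg} becomes essential; the paper handles this with the reciprocal-modulus identity, and your proof would need an analogous case analysis. Until the explicit substitution and the two cases are supplied, the proposal is a plausible plan rather than a proof.
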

\begin{proof}
It is proven in \cite[Sect.~3]{BZ} that the derivative of $G(a)$ can be written in terms of a hypergeometric function. In particular, we have 
\[\dfrac{\di}{\di a}G(a) = \frac{2a}{a^2+3}\pFq{2}{1}{\frac{1}{3},\frac{2}{3}}{1}{\frac{27(a^2-1)^2}{(a^2+3)^3}},\]
for any $a\in (1,3)\cup (3,\infty)$. Assume first that $a>3$.
Using the substitution $a=(p+2)/p$, we have $0<p<1$, so we can apply \cite[p.~112, Thm.~5.6]{Berndt} to deduce
\begin{align*}
\dfrac{\di}{\di a}G(a) &= \frac{p(2+p)}{2(1+p+p^2)}\pFq{2}{1}{\frac{1}{3},\frac{2}{3}}{1}{\frac{27p^2(1+p)^2}{4(1+p+p^2)^3}}\\
&= \frac{p(2+p)}{2\sqrt{1+2p}}\pFq{2}{1}{\frac{1}{2},\frac{1}{2}}{1}{\frac{p^3(2+p)}{1+2p}}\\
&= \frac{2a}{\sqrt{(a-1)^3(a+3)}}\pFq{2}{1}{\frac{1}{2},\frac{1}{2}}{1}{\frac{16a}{(a-1)^3(a+3)}}\\
&= \frac{4a}{\pi \sqrt{(a-1)^3(a+3)}}K(m),
\end{align*}
where the last equality follows from the standard hypergeometric representation of $K(m)$.\\
The remaining cases can be verified in a similar manner using the substitution $a=2p+1$ and the identity \cite[Eq.~19.7.3]{DLMF}
\[\Re(K(u))=\frac{1}{u}\Re\left(K\left(\frac{1}{u}\right)\right),\]
which is valid for $u\in (0,\infty)$.
\end{proof}
We shall compare the derivatives of $G(a)$ and $H(a)$ using the preceding lemmas.

\begin{lemma} \label{L:com}
For $a\in (1,3)\cup (3,\infty)$, the equality
\begin{equation*}
\frac32\dfrac{\di}{\di a}H(a)=\dfrac{\di}{\di a}G(a)+\frac{1}{a}
\end{equation*}
holds. 
\end{lemma}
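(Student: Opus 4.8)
The plan is to prove the identity $\frac32 H'(a) = G'(a) + \frac1a$ by substituting the explicit formulas from Lemma~\ref{L:dh} and Lemma~\ref{L:dg} and reducing everything to a single identity among complete elliptic integrals with the common modulus $m=\sqrt{\frac{16a}{(a-1)^3(a+3)}}$ and characteristic $n=\frac{4a}{(a-1)(a+3)}$. After clearing the common prefactor $\frac{1}{a\sqrt{(a-1)^3(a+3)}}$, the statement becomes an identity of the form
\begin{equation*}
\frac{3}{\pi}\Re\Bigl((a^2+3)K(m)+(a+1)(a-3)\Pi(n,m)\Bigr) = \frac{4a^2}{\pi}\Re(K(m)) + \sqrt{(a-1)^3(a+3)}.
\end{equation*}
Collecting the $K(m)$ terms on one side, the coefficient becomes $3(a^2+3)-4a^2 = 9-a^2 = -(a-3)(a+3)$, so the whole claim reduces to showing
\begin{equation*}
\frac{(a+1)(a-3)}{\pi}\Re\bigl(\Pi(n,m)\bigr) - \frac{(a-3)(a+3)}{\pi}\Re\bigl(K(m)\bigr) = \frac{1}{3}\sqrt{(a-1)^3(a+3)}.
\end{equation*}
This is the crux: I must evaluate the complete elliptic integral of the third kind $\Pi(n,m)$ in closed form at this particular pair $(n,m)$, where the extra rational term $\frac1a$ in the lemma supplies the non-elliptic algebraic contribution on the right.

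The key mechanism I expect to exploit is that $\Pi(n,m)$ admits elementary evaluation precisely when the characteristic $n$ and modulus $m$ satisfy a special relation (a so-called \emph{circular} or \emph{degenerate} case in the classification of complete elliptic integrals of the third kind). Concretely, I would check whether $n$, $m^2$, and the quantity $(1-n)(1-m^2/n)$ conspire so that $\Pi(n,m)$ equals a combination of $K(m)$ and an algebraic term, via one of the standard reduction formulas (for instance those in \cite[19.6--19.7]{DLMF} relating $\Pi(n,m)$ to $K(m)$, $E(m)$, and inverse-trigonometric or logarithmic terms when $n$ lies on the boundary separating the three cases $n<m^2$, $m^2<n<1$, $n>1$). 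The parameter $a$ governs which regime $n$ falls into, so I anticipate needing to track $\Re(\cdot)$ carefully: for $a$ in different subintervals of $(1,3)\cup(3,\infty)$ the modulus $m$ or characteristic $n$ may exceed $1$, forcing the use of imaginary-modulus or reciprocal-modulus transformations, which is exactly why the lemmas are phrased with $\Re$.

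The main obstacle will be establishing the exact closed-form reduction of $\Pi(n,m)$ and matching it against the target algebraic term $\frac13\sqrt{(a-1)^3(a+3)}$, rather than any of the routine algebra. A cleaner route, which I would pursue in parallel, is to avoid evaluating $\Pi(n,m)$ in isolation altogether: instead I would return to the integral representation \eqref{E:dha} for $H'(a)$ and the hypergeometric/integral form underlying $G'(a)$, form the combination $\frac32 H'(a) - G'(a)$ directly at the level of the $t$-integral over $[-1,1]$, and show that the resulting integrand simplifies so that the combination equals $\frac1a$ by an elementary (non-elliptic) antiderivative. In that approach the $K(m)$ contributions should cancel by design in the integrand before any elliptic integral is named, leaving only a rational integral in $t$ whose value is $\frac1a$; this sidesteps the delicate third-kind reduction and the $\Re$-bookkeeping, and I expect it to be the shorter path. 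Either way, once the derivative identity is secured on $(1,3)\cup(3,\infty)$, it holds on each connected subinterval, which is all that is needed to feed into the integration step that yields Theorem~\ref{T:Main}.
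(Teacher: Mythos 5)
Your reduction of the lemma to a single identity between $\Pi(n,m)$ and $K(m)$ is exactly the paper's first step (its equation \eqref{E:KP}), apart from a factor-of-$3$ slip: after dividing your first display by $3$, the coefficient of $\Re\bigl(K(m)\bigr)$ should be $\frac{(a-3)(a+3)}{3\pi}$, not $\frac{(a-3)(a+3)}{\pi}$. The genuine gap is in how you propose to prove that identity. Your first route --- a degenerate-case reduction of $\Pi(n,m)$ from the standard tables --- does not apply: here $1-n=\frac{(a-3)(a+1)}{(a-1)(a+3)}$ and $m^2/n=\frac{4}{(a-1)^2}$, and the pair $(n,m)$ lies in none of the configurations ($n=0$, $n=m^2$, $n=1$, \dots) for which $\Pi$ collapses to $K$, $E$ and elementary terms by a catalogued formula. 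Your second route fails for a structural reason: combining the two derivatives under one integral sign gives $\frac{1}{\pi}\Re\int_{-1}^{1}\frac{(3t+a)\,\di t}{\sqrt{(1-t^2)(2at+a^2-3)(2at+a^2+1)}}$, and the square root of the quartic does \emph{not} cancel, so this is not ``a rational integral in $t$''. Worse, the form $\frac{(3t+a)\,\di t}{\sqrt{Q(t)}}$ is a differential of the third kind on the curve $s^2=Q(t)$ with nonzero residues at the two points at infinity; it is therefore not exact and has no elementary antiderivative. Its integral between branch points is a genuine period, and the assertion that this period equals the algebraic quantity $\pi/a$ is precisely the nontrivial content of the lemma (it reflects a torsion condition on the divisor of poles), not something that falls out of simplifying the integrand.

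What the paper actually does, and what your proposal is missing, is Jia's argument: set $y=\Pi(n,m)-\frac{a+3}{3(a+1)}K(m)$ and, using the derivative formulas for $K$ and $\Pi$ (in which $E(m)$ appears but cancels in exactly this combination), show that after the substitution $a=\frac{3x-1}{x+1}$ the function $y$ satisfies the first-order linear ODE $y'=\frac{1+2x+3x^2}{x(x-1)(1+3x)}\,y$. Its general solution is $C\frac{\sqrt{(x-1)^3(1+3x)}}{x}$, and an initial condition pins down $C=-\frac{\pi}{12}$, which is \eqref{E:KP}. To complete your proof you need this ODE step (or an equivalent pseudo-elliptic/residue argument); the identity will not follow from table lookups or from cancellation inside the $t$-integrand.
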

\begin{proof}
By Lemma~\ref{L:dh}, Lemma~\ref{L:dg}, and some rearrangement, it suffices to prove that, for every $a\in (1,3)\cup (3,\infty)$,
\begin{equation}\label{E:KP}
\Pi(n,m)-\frac{a+3}{3(a+1)}K(m)=\frac{\pi}{3}\frac{\sqrt{(a-1)^3(a+3)}}{(a+1)(a-3)},
\end{equation}
where $m$ and $n$ are as given in Lemma~\ref{L:dh}. Surprisingly, this is equivalent to a known result due to Jia \cite{Jia}, which arises from certain problems in particle physics. His proof is based on the observation that, after applying the change of variable $a=\frac{3x-1}{x+1},$ the function on the left-hand side of \eqref{E:KP}, which he called $y(x)$, satisfies the first-order differential equation
\[y' = \frac{1+2x+3x^2}{x(x-1)(1+3x)}y.\]
This can be derived using the derivative formulas for $K(z)$ and $\Pi(n,z)$ (see \cite[Chapter~19]{DLMF} or \cite[p.~13]{LSZ}) and the chain rule. The solution to the above ODE defined on $(-\infty,-1)\cup (1,\infty)$ is 
\[y(x)=C\frac{\sqrt{(x-1)^3(1+3x)}}{x},\]
where $C$ is a constant. Using some initial conditions, he found that $C=-\frac{\pi}{12}$ and the proof is completed.
\end{proof}

\begin{proof}[Proof of Theorem~\ref{T:Main} for $a\in[1,\infty)$]
The case $a=1$ is trivial since $G(1)=H(1)=0$. Suppose $a\in (1,3)\cup (3,\infty)$. Then we have from Lemma~\ref{L:com} that
\begin{align*}
\frac32 H(a) = \frac32\int_1^a \dfrac{\di}{\di u}H(u) \di u = \int_1^a \left(\dfrac{\di}{\di u}G(u)+\frac{1}{u}\right) \di u=G(a)+\log a.
\end{align*}
Since $G(a),H(a),$ and $\log a$ are continuous functions on $(1,\infty)$, it follows that \eqref{E:Main} is valid for all $a\in [1,\infty)$.
\end{proof}
\begin{remark}
The discontinuity at $a=3$ in Lemma~\ref{L:com} is natural due to the fact that the curve $E_3$ is singular. In particular, the Mahler measures $\m(P_{3,8})$ and $\m(Q_{8})$ are not related to elliptic curve $L$-values. On the other hand, Boyd \cite[Tab.~2]{Boyd} conjectured that 
\[ \m(Q_8)\stackrel{?}=5L'(\chi_{-3},-1),\]
where $\chi_{-3}=\left(\frac{-3}{\cdot}\right).$
\end{remark}

\subsection{The complex cases}

We first simplify the problem a bit to avoid working with complex variables. Let $a= \sqrt{-r},$ where $r\in (0,\infty).$ Then we have
\begin{align*}
P_{a,a^2-1}(x,y)\cdot P_{-a,a^2-1}(x,y)&=\left(y+\frac{1}{y}-r-1\right)^2+r\left(x+\frac{1}{x}\right)^2\\
&=\frac{r}{x^2}\left[x^4+\left(\frac{1}{r}\left(y+\frac{1}{y}-r-1\right)^2+2\right)x^2+1\right]. 
\end{align*}
Since $\m(P_{a,a^2-1})=\m(P_{-a,a^2-1})$, it follows that 
\begin{equation*}
2\m(P_{a,a^2-1}) = \m(S_r)+\log r,
\end{equation*}
where $S_r:=S_r(x,y)=y^2+\left(\frac{1}{r}\left(x+\frac{1}{x}-r-1\right)^2+2\right)y+1.$ Here we have applied the transformation $(x^2,y)\mapsto (y,x)$ and \cite[Lem.~7]{Smyth} to modify the Mahler measure slightly on the right-hand side. Consequently, the proof of Theorem~\ref{T:Main} for the complex cases boils down to showing that
\begin{equation}\label{E:Sr}
\m(S_r) = \frac{4}{3}\m(Q_{-r-1})-\frac{1}{3}\log r,
\end{equation}
for real $r>0$.  We will employ the techniques that we use in the real cases to verify \eqref{E:Sr}. 

\begin{lemma}\label{L:Sr}
Let $r=\frac{(1-p)(2+p)}{p(1+p)}$. Then for $p\in (0,1)$, which is mapped bijectively to $r\in(0,\infty)$, we have
\begin{equation*}
\dfrac{\di }{\di r}\m(S_r) = \frac{1}{\pi}\frac{2p(1+p)}{(1-p)\sqrt{1+2p}}\left(K(m)-\frac{(1+p)^2}{2+p}\Pi(n,m)\right),
\end{equation*}
where $m=\sqrt{\frac{p^3(2+p)}{1+2p}}$ and $n=-p^2/(1+2p).$
\end{lemma}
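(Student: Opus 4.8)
The plan is to follow the proof of Lemma~\ref{L:dh} in spirit: apply Jensen's formula in the variable $y$, differentiate the resulting one-dimensional integral in $r$, and reduce to Legendre normal form by an algebraic change of variable. Write $S_r(x,y)=y^2+B(x)y+1$ with $B(x)=\frac1r\bigl(x+\frac1x-r-1\bigr)^2+2$, and factor $yS_r=(y-y_1(x))(y-y_2(x))$ with $y_1y_2=1$. On the unit circle $x=e^{i\theta}$ one has $B(e^{i\theta})=\frac1r(2\cos\theta-r-1)^2+2\ge 2$, so $\Delta:=B^2-4\ge 0$ everywhere and the root of modulus $\ge 1$ is real; consequently no real part is needed and Jensen's formula gives
\[
\m(S_r)=\frac1\pi\int_0^\pi\log\frac{B(e^{i\theta})+\sqrt{B(e^{i\theta})^2-4}}{2}\,\di\theta .
\]

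Next I would differentiate under the integral sign. Writing $w=2\cos\theta-r-1$, a direct computation gives $\frac{\di B}{\di r}=-\frac{w(w+2r)}{r^2}$ and $B^2-4=\frac{w^2(w^2+4r)}{r^2}$, so that, using $\frac{\di}{\di B}\log\frac{B+\sqrt{B^2-4}}{2}=\frac{1}{\sqrt{B^2-4}}$,
\[
\frac{\di}{\di r}\m(S_r)=-\frac1{\pi r}\int_0^\pi\frac{\sign(w)\,(2\cos\theta+r-1)}{\sqrt{w^2+4r}}\,\di\theta .
\]
For $r>1$ (equivalently $0<p<\tfrac{\sqrt5-1}{2}$) we have $2\cos\theta\le 2<r+1$, hence $w<0$ and $\sign(w)\equiv-1$ on $(0,\pi)$; after the substitution $t=\cos\theta$ this becomes the elliptic integral
\[
\frac{\di}{\di r}\m(S_r)=\frac1{\pi r}\int_{-1}^1\frac{2t+r-1}{\sqrt{(1-t^2)\bigl((2t-r-1)^2+4r\bigr)}}\,\di t .
\]

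I would then reduce this to $K$ and $\Pi$ exactly as in \eqref{E:ta}--\eqref{E:dt}. The quartic $(1-t^2)\bigl((2t-r-1)^2+4r\bigr)$ has real roots $t=\pm1$ and a complex-conjugate pair $t=\frac{r+1}{2}\pm i\sqrt r$, so I would seek a substitution $t=t(x)$, rational in $x^2$, carrying $[0,1]$ bijectively onto $[-1,1]$ with a single pole at $x^2=1/n$, that sends this quartic to $(\text{const})\,(1-x^2)(1-m^2x^2)$. Matching the branch points pins down $m$ and $n$, and substituting $r=\frac{(1-p)(2+p)}{p(1+p)}$ should yield $m^2=\frac{p^3(2+p)}{1+2p}$ and $n=-p^2/(1+2p)$. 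A partial-fraction split $2t+r-1=A+\frac{C}{1-nx^2}$ then converts the integral into $A\,K(m)+C\,\Pi(n,m)$, and simplifying $A$, $C$, and the Jacobian constant should recover the prefactor $\frac{2p(1+p)}{(1-p)\sqrt{1+2p}}$ together with the ratio $C/A=-\frac{(1+p)^2}{2+p}$.

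The obstacles are twofold. The routine-but-delicate part is exhibiting the explicit substitution $t(x)$ and checking the two algebraic identities (the rationalization of the quartic and the partial-fraction split); these are bookkeeping computations of the same flavor as \eqref{E:ta}--\eqref{E:dt}. The genuinely new difficulty is the factor $\sign(w)$, which ceases to be constant once $r<1$: there $w$ changes sign at $\theta_0=\arccos\frac{r+1}{2}$, the integrand acquires a jump, and the clean reduction above fails. I would circumvent this by proving the identity only for $r>1$, where $\sign(w)\equiv-1$, and then extending it to all of $0<r<1$ by real-analytic continuation. Both sides are real-analytic in $p$ on $(0,1)$: the right-hand side because $m\in(0,1)$ and $n<0$ throughout $(0,1)$, so $K(m)$ and $\Pi(n,m)$ are analytic, and the left-hand side because the elliptic curve defined by $S_r=0$ stays nonsingular for every $r>0$, so $\m(S_r)$ depends real-analytically on $r$. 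Since the two sides agree on the open subinterval $0<p<\tfrac{\sqrt5-1}{2}$, the identity theorem for real-analytic functions forces agreement on all of $(0,1)$.
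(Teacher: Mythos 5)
Your setup and your treatment of the range $r>1$ are sound and essentially coincide with the paper's proof: the paper also applies Jensen's formula in $y$, differentiates under the integral sign, and substitutes $t=\cos\theta$ to arrive at $\frac{1}{\pi r}\int_{-1}^1\frac{2t+r-1}{\sqrt{(1-t^2)((2t-r-1)^2+4r)}}\,\di t$. One caveat on the reduction step: because the quartic under the radical has only the two real roots $t=\pm1$ (the other pair is $\frac{r+1}{2}\pm i\sqrt r$), the substitution cannot be taken rational in $x^2$ as in \eqref{E:ta}--\eqref{E:dt}; the paper instead writes $1-t^2$ and $t^2-2ct+c^2+2c-1$ as combinations $A_i(t-\alpha)^2+B_i(t-\beta)^2$, applies the M\"obius map $t=\frac{\beta x-\alpha}{x-1}$, splits by partial fractions, and finishes with an imaginary-modulus transformation of $K$ and $\Pi$. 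So the bookkeeping is genuinely different from the real case, though still elementary.

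The genuine gap is your final continuation step. You are right that $\sign(w)$ is the crux, and your signed formula $\frac{\di}{\di r}\m(S_r)=-\frac{1}{\pi r}\int_0^\pi\sign(w)\,\frac{2\cos\theta+r-1}{\sqrt{w^2+4r}}\,\di\theta$ is correct; but $\m(S_r)$ is \emph{not} real-analytic across $r=1$, so the identity theorem cannot carry the identity from $r>1$ into $r<1$. Indeed $r=1$ is exactly the parameter at which the zero locus of $S_r$ begins to meet the torus (at $x=e^{\pm i\theta_0}$, $y=-1$, where $2\cos\theta_0=r+1$), and writing $\log\frac{B+\sqrt{B^2-4}}{2}=\frac{|w|}{\sqrt r}\,h(w^2/r)$ with $h$ analytic shows that $\m(S_r)$ acquires a $(1-r)_+^{3/2}$-type singularity there; nonsingularity of the projective curve $S_r=0$ is not a sufficient criterion. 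Worse, no argument can close the gap, because the statement fails for small $r$: since $rS_r\to(x+1/x-1)^2y$ coefficientwise, one has $\m(S_r)=-\log r+o(\log r)$ and hence $\frac{\di}{\di r}\m(S_r)\sim-1/r$ as $r\to0^+$ (your signed integrand tends to $-1$ uniformly), whereas the right-hand side of the lemma tends to $-\frac{1}{3r}$ (the unsigned integrand tends to $\sign(2t-1)$, whose average over $[-1,1]$ against $\di t/\sqrt{1-t^2}$ is $-1/3$; the same $-\frac{2}{9(1-p)}$ asymptotic falls out of the stated $K$--$\Pi$ expression as $p\to1^-$). So your correction term over $\{w>0\}$ is genuinely present for $r<1$. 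What your care with $\sign(w)$ has actually uncovered is that the paper's own proof silently drops this factor, which is legitimate only for $r\ge1$; the lemma, and with it \eqref{E:Sr} and the purely imaginary case of Theorem~\ref{T:Main}, should be regarded as established only for $r\ge1$, and the $r\to0^+$ asymptotics above indicate the claimed identity is false for small $r$.
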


\begin{proof}
Let 
\begin{align*}
B(x)&:=B_r(x)=\frac{1}{r}\left(x+\frac{1}{x}-r-1\right)^2+2,\\
\Delta(x)&:=\Delta_r(x)=B(x)^2-4.
\end{align*}
Since $B(e^{i\theta})\ge 2$, we have $\Delta(e^{i\theta})\ge 0.$ Hence, by the arguments used in the proof of Lemma~\ref{L:dh}, 
\[\m(S_r)=\frac{1}{\pi}\Re\int_0^\pi\log\left(\frac{B\left(e^{i\theta}\right)+\sqrt{\Delta\left(e^{i\theta}\right)}}{2}\right)\di \theta.\]
Then we differentiate both sides with respect to $r$ and do some calculations to obtain
\begin{align*}
\dfrac{\di}{\di r}\m(S_r) &= \frac{1}{\pi r}\Re \int_0^\pi \frac{2\cos\theta+r-1}{\sqrt{4r+(r+1-2\cos\theta)^2}}\di \theta\\
&= \frac{1}{\pi r}\int_{-1}^1 \frac{2t+r-1}{\sqrt{4r+(2t-r-1)^2}}\dfrac{\di t}{\sqrt{1-t^2}}\\
&= \frac{1}{\pi r}\int_{-1}^1 \frac{t+c-1}{\sqrt{(1-t^2)(t^2-2ct+c^2+2c-1)}}\di t,
\end{align*}
where $c=\frac{1}{p(1+p)}$. Note that the polynomial in the denominator has only two real roots, so the process of writing this integral in terms of elliptic integrals is more involved than that in the proof of Lemma~\ref{L:dh}. Let us first rewrite the two quadratic polynomials 
\begin{align*}
S_1(t)&:=1-t^2 = A_1(t-\alpha)^2+B_1(t-\beta)^2,\\
S_2(t)&:=t^2-2ct+c^2+2c-1 = A_2(t-\alpha)^2+B_2(t-\beta)^2,
\end{align*}
where 
\[A_1= -\frac{(1+p)^2}{1+2p},\,B_1=\frac{p^2}{1+2p},A_2=\frac{p(2+p)}{1+2p},\,B_2=\frac{1-p^2}{1+2p},\,\alpha = \frac{p}{1+p},\,\beta = \frac{p+1}{p}.\]
Let $t=\frac{\beta x-\alpha}{x-1}$. Then we have $x=\frac{t-\alpha}{t-\beta}$, whence
\begin{align*}
t+c-1 &= \frac{(\beta+c-1)x-(\alpha+c-1)}{x-1},\\
\dfrac{\di t}{\sqrt{S_1(t)S_2(t)}}&=\dfrac{\di x}{(\alpha-\beta)\sqrt{(A_1x^2+B_1)(A_2x^2+B_2)}}.
\end{align*}
Applying partial fraction decomposition results in 
\begin{align*}
\int_{-1}^1 \frac{t+c-1}{\sqrt{S_1(t)S_2(t)}}\di t &= \int_{\frac{\alpha+1}{\beta+1}}^{\frac{\alpha-1}{\beta-1}}\left(\frac{\beta+c-1}{\alpha-\beta}+\frac{1}{1-x}\right)\dfrac{\di x}{\sqrt{(A_1x^2+B_1)(A_2x^2+B_2)}}\\
&=\int_{\frac{p}{1+p}}^{-\frac{p}{1+p}}\left(-\frac{2+p}{1+2p}+\frac{1}{1-x^2}+\frac{x}{1-x^2}\right)\dfrac{\di x}{\sqrt{(A_1x^2+B_1)(A_2x^2+B_2)}}.
\end{align*}
Using the substitution $u=x^2$, one sees that 
\begin{equation*}
\int_{\frac{p}{1+p}}^{-\frac{p}{1+p}}\frac{x}{1-x^2}\dfrac{\di x}{\sqrt{(A_1x^2+B_1)(A_2x^2+B_2)}}=0.
\end{equation*}
On the other hand, by the substitution $x\mapsto -\sqrt{-\frac{B_1}{A_1}}x=-\frac{p}{1+p}x,$
\begin{align*}
\int_0^{-\frac{p}{1+p}}\dfrac{\di x}{\sqrt{(A_1x^2+B_1)(A_2x^2+B_2)}} &= -\frac{1}{\sqrt{-A_1B_2}}\int_0^1 \dfrac{\di x}{\sqrt{(1-x^2)(1-(A_2B_1/A_1B_2)x^2)}}\\
&=-\frac{1+2p}{(1+p)\sqrt{1-p^2}}K\left(\sqrt{\frac{p^3(2+p)}{(1+p)^2(p^2-1)}}\right).
\end{align*}
Hence we have 
\begin{equation}\label{E:1}
\int_{\frac{p}{1+p}}^{-\frac{p}{1+p}}\dfrac{\di x}{\sqrt{(A_1x^2+B_1)(A_2x^2+B_2)}}=-\frac{2(1+2p)}{(1+p)\sqrt{1-p^2}}K\left(\sqrt{\frac{p^3(2+p)}{(1+p)^2(p^2-1)}}\right).
\end{equation}
By the same transformation, one can easily deduce
\begin{equation}\label{E:2}
\int_{\frac{p}{1+p}}^{-\frac{p}{1+p}}\frac{1}{1-x^2}\dfrac{\di x}{\sqrt{(A_1x^2+B_1)(A_2x^2+B_2)}}=-\frac{2(1+2p)}{(1+p)\sqrt{1-p^2}}\Pi\left(\left(\frac{p}{1+p}\right)^2, \sqrt{\frac{p^3(2+p)}{(1+p)^2(p^2-1)}}\right).
\end{equation}
In the final step, we alter the arguments of the elliptic integrals in \eqref{E:1} and \eqref{E:2} using the following identities \cite[Eq.~15.8.1, 16.16.8]{DLMF}: 
\begin{align*}
K(\sqrt{z}) &= \frac{1}{\sqrt{1-z}}K\left(\sqrt{\frac{z}{z-1}}\right),\\
\Pi(n,\sqrt{z}) &= \frac{1}{(1-n)\sqrt{1-z}}\Pi\left(\frac{n}{n-1},\sqrt{\frac{z}{z-1}}\right),
\end{align*}
which hold whenever $|\arg(1-z)|<\pi$ and $|\arg(1-n)|<\pi$.
\end{proof}

\begin{lemma}\label{L:Qr}
Under the same assumption as those in Lemma~\ref{L:Sr}, we have
\[\dfrac{\di}{\di r}\m(Q_{-r-1})=\frac{p(1+p)}{\pi \sqrt{1+2p}}K(m).\]
\end{lemma}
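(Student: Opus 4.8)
The plan is to compute $\frac{\di}{\di r}\m(Q_{-r-1})$ directly from Jensen's formula, in the spirit of Lemma~\ref{L:Sr}, rather than routing through Bertin--Zudilin's hypergeometric formula; this is in fact the cleaner option here, since the answer contains only $K(m)$ and no $\Pi$-term. Setting $k=-r-1$ and regarding $Q_k$ as a quadratic in $y$,
\[
Q_{-r-1}=(1+x)y^2+\bigl((1+x)^2+(r+1)x\bigr)y+x(1+x),
\]
the product of its two roots in $y$ equals $x$, so $|y_1y_2|=1$ on $|x|=1$; since $\m(1+x)=0$, Jensen's formula gives $\m(Q_{-r-1})=\frac{1}{2\pi i}\int_{|x|=1}\log|y_{\max}(x)|\,\frac{\di x}{x}$, where $y_{\max}$ is the root of larger modulus. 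A preliminary check shows that no root ever lies on $|y|=1$ (equivalently, the two roots are never simultaneously on the unit circle), so $y_{\max}$ is a single smooth branch with $|y_{\max}|>1$ and differentiation under the integral sign is unproblematic.

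Next I would differentiate by implicit differentiation of $Q_{-r-1}=0$ at fixed $x$. Here $\partial_r Q_{-r-1}=xy$ and $\partial_y Q_{-r-1}=\pm\sqrt{D}$ at a root, where $D=B^2-4AC$ is the discriminant in $y$, so that $\frac{\partial_r y}{y}=\mp\frac{x}{\sqrt{D}}$. The decisive simplification is the factorisation
\[
D=x^2\bigl(w^2+2(r-1)w+(r+1)^2\bigr),\qquad w=x+\tfrac1x+2=2+2\cos\theta,
\]
which makes $x/\sqrt{D}=\pm P(w)^{-1/2}$ real, where $P(w)=w^2+2(r-1)w+(r+1)^2>0$ (its discriminant equals $-16r$). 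Taking real parts and integrating over the full circle yields, after the substitution $t=\cos\theta$ and completing the square via $P(2+2t)=4\bigl((t+\tfrac{r+1}{2})^2+r\bigr)$,
\[
\frac{\di}{\di r}\m(Q_{-r-1})=\frac{1}{2\pi}\int_{-1}^1\frac{\di t}{\sqrt{(1-t^2)\bigl((t+\frac{r+1}{2})^2+r\bigr)}}.
\]

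The integrand has real branch points $\pm1$ and a complex conjugate pair $-\tfrac{r+1}{2}\pm i\sqrt r$, so a standard reduction to Legendre form (the Byrd--Friedman normalisation, or a Möbius substitution as in Lemma~\ref{L:Sr}) expresses it as $\frac{1}{\pi\sqrt{AB}}\,K(k)$ with
\[
A=\tfrac12\sqrt{(r+1)(r+9)},\qquad B=\tfrac{r+1}{2},\qquad k^2=\frac{4-(A-B)^2}{4AB}.
\]
Finally I would substitute $r=\frac{(1-p)(2+p)}{p(1+p)}$ and simplify, using the identities $(r+3)^2+4r=(r+1)(r+9)$ and $(r-1)^2+4r=(r+1)^2$, to get $k^2=\frac{p^3(2+p)}{1+2p}=m^2$ and $\frac{1}{\pi\sqrt{AB}}=\frac{p(1+p)}{\pi\sqrt{1+2p}}$, which is exactly the asserted formula.

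The genuinely delicate steps are, first, verifying that the two roots stay off the unit circle for all $\theta$ so that $y_{\max}$ is globally smooth and the overall sign of the derivative is constant (this is where one must keep track of the branch of $\sqrt{D}$ and of which root is the outer one), and second, the elliptic reduction together with the subsequent algebraic collapse of the nested square roots, which the rational parametrisation of $r$ is precisely designed to produce. As an alternative one could imitate Lemma~\ref{L:dg}, starting from $\frac{\di}{\di k}\m(Q_k)=\frac{1}{k+4}\,{}_2F_1\!\left(\tfrac13,\tfrac23;1;\tfrac{27k^2}{(k+4)^3}\right)$ and a cubic transformation; but at $k=-r-1$ the hypergeometric argument $\frac{27(r+1)^2}{(3-r)^3}$ leaves $[0,1)$ (it is less than $-1$ for $r>3$ and greater than $1$ for $r<3$), which forces an analytic continuation and a real-part extraction that the direct computation above avoids altogether.
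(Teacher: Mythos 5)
Your proof is correct, but it takes a genuinely different route from the paper's. The paper disposes of this lemma in two lines, citing Bertin--Zudilin's hypergeometric formula for $\frac{\di}{\di k}\m(Q_k)$ together with Berndt's cubic transformation, i.e.\ the same machinery as Lemma~\ref{L:dg}, and leaving the reader to adapt that argument to $k=-r-1$. You instead compute directly from Jensen's formula, in exactly the style the paper itself uses for Lemmas~\ref{L:dh} and~\ref{L:Sr}: the discriminant factorisation $D=x^2\,(w^2+2(r-1)w+(r+1)^2)$ is correct (the inner quadratic has discriminant $-16r<0$, hence is positive on $w\in[0,4]$), the reduction to $\frac{1}{2\pi}\int_{-1}^1\frac{\di t}{\sqrt{(1-t^2)\,((t+\frac{r+1}{2})^2+r)}}$ checks out, and the Legendre-form data simplify under the stated parametrisation to $A=\frac{2p+1}{p(1+p)}$, $B=\frac{1}{p(1+p)}$, giving $k^2=\frac{p^3(2+p)}{1+2p}=m^2$ and prefactor $\frac{p(1+p)}{\pi\sqrt{1+2p}}$ as claimed. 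What your approach buys is self-containedness, plus it sidesteps the continuation of ${}_2F_1(\frac13,\frac23;1;z)$ to arguments outside $[0,1)$ that the paper's citation silently requires; what it costs is the branch and sign bookkeeping you flag but do not fully close. That gap is easy to fill: since $|y_1y_2|=1$ and $|y_{1,2}|=\frac{|w+r+1\mp\sqrt{P(w)}|}{2\sqrt{w}}$, both roots lie on the unit circle only if $(w+r+1)\sqrt{P(w)}=0$, which is impossible for $w\in(0,4]$ and $r>0$; hence the sign in the integrand is constant in $\theta$ and in $r$, and it must be $+$ because $\m(Q_{-r-1})\sim\log r$ is increasing. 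One small inaccuracy in your closing aside: the argument $\frac{27(r+1)^2}{(3-r)^3}$ is indeed $>1$ for $0<r<3$, but for large $r$ it lies in $(-1,0)$ rather than below $-1$; this does not affect your actual proof, which is sound.
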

\begin{proof}
This is again a consequence of \cite[Eq.~(5)]{BZ} and \cite[p.~112, Thm.~5.6]{Berndt}. See the proof of Lemma~\ref{L:dg} for further details.
\end{proof}

We can now give a comparison between the two derivatives in the previous two lemmas.
\begin{lemma}\label{L:com2}
For $r>0$, 
\begin{equation}\label{E:com2}
\dfrac{\di}{\di r} \m(S_r) = \frac{4}{3}\dfrac{\di}{\di r}\m(Q_{-r-1})-\frac{1}{3r}.
\end{equation}
\end{lemma}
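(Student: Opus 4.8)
The plan is to turn the claimed identity \eqref{E:com2} into a single relation between complete elliptic integrals in the variable $p$, exactly as Lemma~\ref{L:com} was reduced to \eqref{E:KP}. First I would substitute the expressions for $\dfrac{\di}{\di r}\m(S_r)$ and $\dfrac{\di}{\di r}\m(Q_{-r-1})$ from Lemma~\ref{L:Sr} and Lemma~\ref{L:Qr} into \eqref{E:com2}, together with $\frac{1}{r}=\frac{p(1+p)}{(1-p)(2+p)}$, which follows from the parametrization $r=\frac{(1-p)(2+p)}{p(1+p)}$. After dividing through by the common factor $\frac{p(1+p)}{\pi\sqrt{1+2p}}$ and rearranging, all the algebraic prefactors collapse and \eqref{E:com2} becomes equivalent to
\[
\Pi(n,m)-\frac{(2+p)(1+2p)}{3(1+p)^2}K(m)=\frac{\pi\sqrt{1+2p}}{6(1+p)^2},
\]
where $m=\sqrt{\frac{p^3(2+p)}{1+2p}}$ and $n=-\frac{p^2}{1+2p}$ are as in Lemma~\ref{L:Sr}. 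Thus the whole lemma reduces to proving this one identity for $p\in(0,1)$.

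To prove it I would follow the ODE method of Jia \cite{Jia} employed for \eqref{E:KP}. Denote by $F(p)$ the left-hand side and by $G(p)=\frac{\pi\sqrt{1+2p}}{6(1+p)^2}$ the right-hand side. Differentiating $F$ with respect to $p$ using the standard derivative formulas for $K(z)$ and $\Pi(n,z)$ (see \cite[Chapter~19]{DLMF}) and the chain rule---keeping in mind that both the modulus $m$ and the characteristic $n$ depend on $p$---I expect the arising $E(m)$-terms and the extra $\Pi(n,m)$-terms to recombine, so that $F$ satisfies a first-order linear ODE $F'=\lambda(p)F+\mu(p)$ with explicit rational $\lambda(p)$ and elementary $\mu(p)$. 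A direct check then shows that $G$ solves the same inhomogeneous equation. Since a first-order linear ODE has a unique solution once one value is fixed, and since as $p\to 0^+$ one has $m,n\to 0$ and $K(m),\Pi(n,m)\to\frac{\pi}{2}$, giving $F(0^+)=\frac{\pi}{2}\left(1-\frac{2}{3}\right)=\frac{\pi}{6}=G(0^+)$, it follows that $F\equiv G$ on $(0,1)$, which is the desired identity.

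I expect the main obstacle to be the derivative computation for $F$: unlike the real case, here the characteristic $n=-p^2/(1+2p)$ and the modulus $m$ vary simultaneously with $p$, so applying the partial derivatives of $\Pi$ in both arguments produces several $E(m)$ and $\Pi(n,m)$ contributions that must cancel down to the clean linear form $\lambda F+\mu$; verifying this cancellation is the technical heart of the argument. An attractive alternative, which would sidestep part of this computation, is to deduce the required identity directly from the already-established relation \eqref{E:KP}: rewritten in terms of $p$ via $a=(p+2)/p$, equation \eqref{E:KP} involves $\Pi(n_1,m)$ with the \emph{same} modulus $m$ but characteristic $n_1=\frac{p(2+p)}{1+2p}$, so an addition relation for the complete integral of the third kind linking $\Pi(n_1,m)$, $\Pi(n,m)$ and $K(m)$ at fixed modulus would convert \eqref{E:KP} into the required identity.
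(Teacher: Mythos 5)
Your proposal is correct and follows essentially the same route as the paper: the same reduction of \eqref{E:com2} to the identity $\Pi(n,m)-\frac{(2+p)(1+2p)}{3(1+p)^2}K(m)=\frac{\pi\sqrt{1+2p}}{6(1+p)^2}$, followed by the same Jia-style ODE argument with the initial condition at $p\to 0^+$. The only point worth noting is that the cancellation you anticipate works out even more cleanly than you expect: the resulting first-order ODE is homogeneous, $w'=-\frac{1+3p}{(1+p)(1+2p)}w$, so the right-hand side need not be checked against an inhomogeneous equation but simply identified as the general solution with $C=\pi/6$.
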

\begin{proof}
By Lemma~\ref{L:Sr} and Lemma~\ref{L:Qr}, proving \eqref{E:com2} amounts to verifying that the following identity is true for $0<p<1$:
\begin{equation}\label{E:com3}
\Pi(n,m)-\frac{(2+p)(1+2p)}{3(1+p)^2}K(m) = \frac{\pi}{6}\frac{\sqrt{1+2p}}{(1+p)^2},
\end{equation}
where $m=\sqrt{\frac{p^3(2+p)}{1+2p}}$ and $n=-p^2/(1+2p).$ We shall imitate Jia's arguments, outlined in the proof of Lemma~\ref{L:com}, to prove \eqref{E:com3}. Let us denote the function on the left-hand side of \eqref{E:com3} by $w(p)$. Computing the derivatives of the two elliptic integrals yields
\begin{align*}
\dfrac{\di}{\di p}K(m)&=\frac{3}{p(1-p^2)(2+p)}E(m)-\frac{3(1+p)^2}{p(2+p)(1+2p)}K(m),\\
\dfrac{\di}{\di p}\Pi(n,m) &= \frac{1+2p}{p(1-p^2)(1+p)^2}E(m)-\frac{1}{p(1+p)}K(m)+\frac{1+3p}{(1-p^2)(1+p)^2(1+2p)}\Pi(n,m).
\end{align*}
Hence it is easily seen that $w(p)$ satisfies 
\begin{equation*}
w'=-\frac{1+3p}{(1+p)(1+2p)}w.
\end{equation*}
This differential equation has a general solution on the interval $(0,1)$ of the form 
\[w(p)=C\frac{\sqrt{1+2p}}{(1+p)^2},\]
where $C$ is constant. Letting $p\rightarrow 0^+$, one immediately sees
\[C= \Pi(0,0)-\frac23K(0)= \frac{\pi}{2}-\frac{\pi}{3}=\frac{\pi}{6}\]
and we have \eqref{E:com3}.
\end{proof}

\begin{proof}[Proof of Theorem~\ref{T:Main} for $a=\sqrt{-r}$]
Integrating both sides of \eqref{E:com2} results in
\begin{equation*}
\m(S_r)=\frac{4}{3}\m(Q_{-r-1})-\frac{1}{3}\log r +c,
\end{equation*}
for some constant $c$. Note that as $r\rightarrow\infty$
\begin{align*}
\m(S_r)&=\m\left(\frac{r}{x^2}\left(x^2y+\frac{1}{r}(x^2y^2-2x^3y+4x^2y-2xy+x^2)+\frac{1}{r^2}(y(x^2-x+1)^2)\right)\right)\\
&= \log r +\m(x^2y)+O(1/r)= \log r+O(1/r),
\\
m(Q_{-r-1})&=\m\left(r\left(xy+\frac{1}{r}(xy+(1+x)(1+y)(x+y))\right)\right)\\
&= \log r +\m(xy)+O(1/r) =\log r+O(1/r).
\end{align*}
As a consequence, $\m(S_r)-\frac43\m(Q_{-r-1})+\frac13\log r \rightarrow 0$ as $r\rightarrow \infty$, which implies $c=0.$ This proves \eqref{E:Sr}, so \eqref{E:Main} is valid for all $a\in\{\sqrt{-r}\mid r\in(0,\infty)\}.$
\end{proof}

\section{Modular unit parametrization in level $30$}\label{S:munit}
The aim of this section is to prove Theorem~\ref{T:P23}; i.e.,
\begin{equation*}
\m(P_{2,3}) = \frac{2}{3}(L'(E_2,0)+\log 2).
\end{equation*}
Following the notation in \cite{Zudilin}, for each $a\in\N$, we define a level $30$ modular unit $g_a(\tau)$ as
\begin{equation*}
g_a(\tau):= q^{15B_2(a/30)}\prod_{\substack{n \geq 1 \\ n \equiv a \bmod 30}}(1-q^n)\prod_{\substack{n \geq 1 \\ n \equiv -a \bmod 30}}(1-q^n), \qquad q= e^{2\pi i \tau},
\end{equation*}
where $B_2(x)= x^2-x+1/6$. Let $\tilde{x}(\tau),x_0(\tau),$ and $\tilde{y}(\tau)$ be functions on the upper half-plane $\mathcal{H}=\{z\in \mathbb{C} \mid \Im z>0\}$ defined by 
\begin{align*}
\tilde{x}(\tau)&=2\frac{\eta(2\tau)\eta(6\tau)\eta(10\tau)\eta(30\tau)}{\eta(\tau)\eta(3\tau)\eta(5\tau)\eta(15\tau)}\\
&=\frac{2}{g_1(\tau)g_3^2(\tau)g_5^2(\tau)g_7(\tau)g_9^2(\tau)g_{11}(\tau)g_{13}(\tau)g_{15}^2(\tau)},\\
x_0(\tau)&= \frac{\tilde{x}(\tau)}{2}= (g_1(\tau)g_3^2(\tau)g_5^2(\tau)g_7(\tau)g_9^2(\tau)g_{11}(\tau)g_{13}(\tau)g_{15}^2(\tau))^{-1},\\
\tilde{y}(\tau)&= -\left(\frac{\eta(\tau)\eta(5\tau)\eta(6\tau)\eta(30\tau)}{\eta(2\tau)\eta(3\tau)\eta(10\tau)\eta(15\tau)}\right)^2\\
&= -g_1^2(\tau)g_5^4(\tau)g_7^2(\tau)g_{11}^2(\tau)g_{13}^2(\tau),
\end{align*} 
where $\eta(\tau)$ is the Dedekind eta function. In the following theorem, we will show that the curve $P_{2,3}(x,y)=0$ can be parametrized by $\tilde{x}(\tau)$ and $\tilde{y}(\tau)$. 
\begin{theorem}\label{T:ME} The following identity holds:
\begin{equation}\label{E:MP}
2\left(\tilde{x}(\tau)+\frac{1}{\tilde{x}(\tau)}\right)+\tilde{y}(\tau)+\frac{1}{\tilde{y}(\tau)}+3=0.
\end{equation}
\end{theorem}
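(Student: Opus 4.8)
The plan is to recognise that the left-hand side of \eqref{E:MP} is a modular function on $\Gamma_0(30)$ and to prove it vanishes via the standard principle that a function holomorphic on the compact modular curve $X_0(30)$ is constant. Set
\[
F(\tau)=2\Bigl(\tilde{x}(\tau)+\tilde{x}(\tau)^{-1}\Bigr)+\tilde{y}(\tau)+\tilde{y}(\tau)^{-1}+3.
\]
First I would confirm, using Ligozat's criteria, that each of $\tilde{x}$ and $\tilde{y}$ is a weight-$0$ modular function on $\Gamma_0(30)$ with trivial character. For $\tilde{x}$ the exponent vector $(r_d)_{d\mid 30}$ satisfies $\sum_{d\mid 30}d\,r_d=24\equiv 0\pmod{24}$ and $\sum_{d\mid 30}(30/d)r_d=-24\equiv 0\pmod{24}$, while $\prod_{d\mid 30}d^{r_d}=16$ is a perfect square; the analogous Ligozat sums for $\tilde{y}$ are $24$ and $24$ (both $\equiv 0\pmod{24}$) and its associated constant is $1$, again a square. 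Since $\eta$ is holomorphic and nonvanishing on $\mathcal{H}$, so are $\tilde{x}$, $\tilde{y}$ and their reciprocals; hence $F$ is holomorphic on $\mathcal{H}$ and its only possible singularities are at the eight cusps of $X_0(30)$, which are indexed by the divisors $d\mid 30$.

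Next I would compute the order of $F$ at each cusp by applying Ligozat's order formula
\[
\mathrm{ord}_{a/c}\Bigl(\prod_{d\mid N}\eta(d\tau)^{r_d}\Bigr)=\frac{N}{24}\sum_{d\mid N}\frac{\gcd(c,d)^2\,r_d}{\gcd(c,N/c)\,c\,d}
\]
to $\tilde{x}$, $\tilde{y}$ and their reciprocals (for which the order simply changes sign); equivalently one may read off these orders from the known divisors of the modular units $g_a$ recorded in \cite{Zudilin}. At the cusp $\infty$ one finds $\tilde{x}=2q+O(q^2)$ and $\tilde{y}=-q+O(q^2)$, so the two polar terms $2\tilde{x}^{-1}\sim q^{-1}$ and $\tilde{y}^{-1}\sim -q^{-1}$ cancel at leading order; I would then expand to $O(q)$ to check that the constant term of $F$ vanishes as well. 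The Atkin--Lehner involutions $W_d$ permute the cusps of $X_0(30)$ and carry these eta quotients to eta quotients, which cuts down the number of cusps that must be analysed independently to a handful of orbit representatives.

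The delicate point, and what I expect to be the main obstacle, is establishing holomorphy at the cusps other than $\infty$: wherever $\tilde{x}$ or $\tilde{y}$ acquires a zero, the corresponding reciprocal term develops a pole, and these poles must cancel inside $F$. Cancellation requires matching not merely the \emph{orders} but the \emph{leading coefficients}, which carry both the explicit constants (the factor $2$ in $\tilde{x}$ and the sign in $\tilde{y}$) and the roots of unity produced when the transformation law $\eta(-1/\tau)=\sqrt{-i\tau}\,\eta(\tau)$ — and, more generally, the action of a matrix taking $\infty$ to a cusp $a/c$ of the appropriate width — is applied factor by factor. Tracking these phases and verifying the exact cancellation at every polar cusp is the heart of the computation. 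Once holomorphy at all eight cusps is secured, $F$ is a holomorphic function on the compact Riemann surface $X_0(30)$, hence constant, and the $q$-expansion at $\infty$ computed above shows that this constant is $0$; therefore $F\equiv 0$, which is exactly \eqref{E:MP}.
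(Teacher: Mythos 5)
Your overall strategy --- view the left-hand side as a weight-$0$ modular function $F$ on $X_0(30)$, show it is holomorphic on the compactified curve, and conclude it equals its value at $\infty$ --- is sound in principle and genuinely different from the paper's argument. Your Ligozat verifications for $\tilde{x}$ and $\tilde{y}$ are correct, and your leading-order computation at $\infty$ (where $2\tilde{x}^{-1}\sim q^{-1}$ and $\tilde{y}^{-1}\sim -q^{-1}$ cancel, and the constant term works out to $-1+(-2)+3=0$) is right. But as written the proposal has a genuine gap: the entire burden of the proof is pushed onto establishing holomorphy of $F$ at the seven cusps other than $\infty$, and you explicitly defer that verification (``the heart of the computation'') rather than carry it out. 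This is not a routine omission. At a cusp where $\tilde{x}$ or $\tilde{y}$ vanishes, you must show that the polar parts of $2\tilde{x}^{-1}$ and $\tilde{y}^{-1}$ cancel \emph{exactly}, which requires (i) that the orders of vanishing of $\tilde{x}$ and $\tilde{y}$ agree at every such cusp --- nothing in your setup guarantees this a priori, and if they disagreed at even one cusp your argument would collapse --- and (ii) that the leading coefficients, including the eta-multiplier roots of unity accumulated factor by factor under the cusp-normalizing matrices, match with the correct constants. Tracking these phases for eight eta factors at up to seven cusps is exactly the kind of computation that is easy to get wrong and that you have not performed.

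The paper avoids this entirely by a denominator-clearing trick: multiplying $F$ by $G(\tau)=\eta(\tau)\eta(3\tau)\eta(5\tau)\eta(15\tau)$, whose cuspidal zeros are large enough to absorb all poles of $\tilde{x}^{\pm1}$ and $\tilde{y}^{\pm1}$, so that $GF$ is a \emph{holomorphic} weight-$2$ form on $\Gamma_0(30)$ (verified by Newman/Ligozat). Sturm's bound then reduces the identity to checking that the first twelve Fourier coefficients of $GF$ at $\infty$ vanish --- a single finite computation at one cusp, with no phase-tracking anywhere else. If you want to salvage your weight-$0$ approach without the cusp-by-cusp analysis, the cleanest repair is essentially to rediscover this device: bound the total polar divisor of $F$ by the cuspidal divisor of a suitable eta quotient and convert the problem into a finite $q$-expansion check, i.e.\ a valence/Sturm argument. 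As it stands, your proposal is a credible plan but not a proof.
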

\begin{proof}
Let $G(\tau)=\eta(\tau)\eta(3\tau)\eta(5\tau)\eta(15\tau)$. Then it can be shown using Newman's theorem and Ligozat's theorem \cite[Thm.~1.64 and ~1.65]{Ono} that \[G(\tau),G(\tau)\tilde{x}(\tau),G(\tau)\tilde{x}(\tau)^{-1},G(\tau)\tilde{y}(\tau),G(\tau)\tilde{y}(\tau)^{-1}\in M_2(\Gamma_0(30)),\] where $M_k(\Gamma_0(N))$ denotes the space of weight $k$ modular forms for $\Gamma_0(N).$ Hence we have 
\[F(\tau):=G(\tau)\left(2\left(\tilde{x}(\tau)+\frac{1}{\tilde{x}(\tau)}\right)+\tilde{y}(\tau)+\frac{1}{\tilde{y}(\tau)}+3\right)\in M_2(\Gamma_0(30)).\]
By Sturm's theorem \cite{Sturm}, a weight $2$ modular form for $\Gamma_0(N)$ is identically zero if its first $[\Gamma_0(1):\Gamma_0(N)]/6$ Fourier coefficients vanish. Since 
\[ [\Gamma_0(1):\Gamma_0(30)]=30\left(1+\frac12\right)\left(1+\frac13\right)\left(1+\frac15\right)=72, \]
we only need to show that the first twelve coefficients of $F(\tau)$ are zero. These can be easily computed using a computer.
\end{proof}
\begin{remark} To the best of our knowledge, the modular equation \eqref{E:MP} never appeared in the literature. We discovered it via a modular parametrization of the conductor $30$ elliptic curve  
\[E_2: Y^2= X\left(X^2-\frac{11}{4}X+4\right)\]
and the transformations \cite[Eq.~(11)]{LSZ}
\begin{equation}\label{E:xy}
x=\frac{3X-2Y}{X(X-4)}, \quad y=\frac{3X+2Y}{2X(X-1)}
\end{equation}
with the aid of \texttt{CoCalc} \cite{Sage} and \texttt{qseries} package in \texttt{Maple} \cite{Garvan}. Note, however, that the corresponding $X$ and $Y$ are not modular unit parametrization of $E_2$. It would be interesting to find one for $E_2$ or the curve $(1+x)(1+y)(x+y)-3xy=0$.
\end{remark}
Consider the following CM points
\[\tau_1 = \frac{-15+\sqrt{-15}}{60},\quad \tau_2= \frac{-5+\sqrt{-5}}{30},\quad \tau_3= \frac{5+\sqrt{-5}}{30},\quad \tau_4= \frac{15+\sqrt{-15}}{60}.\] 
We shall denote by $[\tau,\tau']$ the geodesic (a circular arc or a vertical straight line orthogonal to the $x$-axis) connecting $\tau$ and $\tau'$. The geodesics $[\tau_1,\tau_2],[\tau_2,\tau_3],$ and $[\tau_3,\tau_4]$ are illustrated in Figure~\ref{F:paths} below. 
\begin{figure}[h!]
\centering
  \includegraphics[width=6 in]{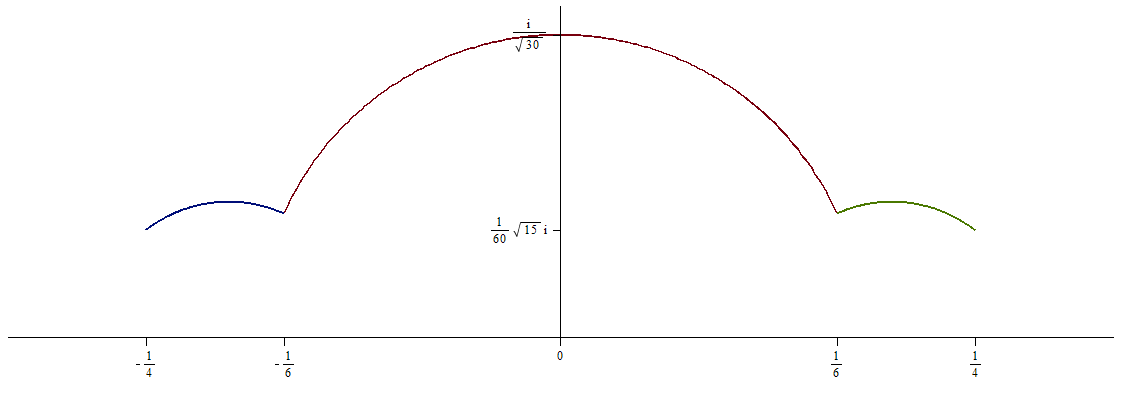}
  \caption{The geodesics $[\tau_1,\tau_2],[\tau_2,\tau_3],[\tau_3,\tau_4]$}
  \label{F:paths}
\end{figure}

Let us briefly explain motivation behind the construction of geodesics in Figure~\ref{F:paths}. It will be shown in the proof of Lemma~\ref{L:cusp} that $\m(P_{2,3})$ can be written as an integral over a path joining two CM points on $E_2$. This integral can then be transformed into an integral of functions involving $\tilde{x}(\tau)$ and $\tilde{y}(\tau)$ from $\tau_1$ to $\tau_4$ on $X_0(30)$, so the integration path becomes a geodesic in $\mathcal{H}$. The strategy is to split $[\tau_1,\tau_4]$ into smaller paths on which the value of $\tilde{y}(\tau)$ is real in order to make the calculations simple. Based on some numerical computation, we find it suitable to divide $[\tau_1,\tau_4]$ into three geodesics as shown above. 

The (normalized) Atkin-Lehner involutions 
\[W_6= 
\frac{1}{\sqrt{6}}\begin{pmatrix}
6 & 1\\
30 & 6
\end{pmatrix}, \quad W_{10}=
\frac{1}{\sqrt{10}}\begin{pmatrix}
10 & 3\\
30 & 10
\end{pmatrix},\quad W_{30}= 
\frac{1}{\sqrt{30}}\begin{pmatrix}
0 & -1\\
30 & 0
\end{pmatrix},\]
of $\Gamma_0(30)$ will be invoked frequently in the proofs of the subsequent lemmas, so we record them here for convenience.
\begin{lemma}\label{L:paths}
For $\tau\in [\tau_i,\tau_{i+1}], i=1,2,3,$ we have $|\tilde{x}(\tau)|=1$ and $\tilde{y}(\tau)\in \mathbb{R}.$
\end{lemma}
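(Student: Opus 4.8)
The plan is to prove the two claims about $\tilde{x}(\tau)$ and $\tilde{y}(\tau)$ on each geodesic $[\tau_i,\tau_{i+1}]$ by exploiting the symmetries of the Atkin-Lehner involutions $W_6, W_{10}, W_{30}$ together with the CM nature of the endpoints $\tau_1,\ldots,\tau_4$. The key observation driving both assertions is that each geodesic should be a fixed-point locus of a suitable antiholomorphic involution of $\mathcal{H}$ obtained by composing an Atkin-Lehner involution with complex conjugation $\tau\mapsto -\bar\tau$. First I would verify that each endpoint $\tau_i$ is a fixed point (on the boundary, at a cusp or elliptic point) of the relevant involutions; for instance, $W_{30}$ acts as $\tau\mapsto -1/(30\tau)$, and the CM points involving $\sqrt{-15}$ and $\sqrt{-5}$ are chosen precisely so that they lie on circles $|\tau|=$ const centered appropriately, which are the images of vertical or circular geodesics under these maps.

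The main mechanism I would use is the following. Under the eta-quotient formulas defining $\tilde{x}$ and $\tilde{y}$, one can compute explicitly how each function transforms under $W_6, W_{10}, W_{30}$ using the standard transformation law of the Dedekind eta function under $\tau\mapsto -1/(N\tau)$ and under $\Gamma_0(N)$. I expect to find relations of the shape $\tilde{x}(W_d\tau)=\varepsilon_d/\tilde{x}(\tau)$ or $\tilde{x}(W_d\tau)=\varepsilon_d\,\tilde{x}(\tau)$ (and similarly for $\tilde{y}$) for appropriate roots of unity or signs $\varepsilon_d$. Combining such a relation with the reflection principle $\overline{\tilde{x}(\tau)}=\tilde{x}(-\bar\tau)$ (valid because the $q$-expansions have rational, indeed integer, coefficients) should yield, on the appropriate fixed-point geodesic, an identity of the form $\overline{\tilde{x}(\tau)}=1/\tilde{x}(\tau)$, which is exactly $|\tilde{x}(\tau)|=1$, and an identity $\overline{\tilde{y}(\tau)}=\tilde{y}(\tau)$, which is exactly $\tilde{y}(\tau)\in\mathbb{R}$. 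Each of the three geodesics $[\tau_1,\tau_2],[\tau_2,\tau_3],[\tau_3,\tau_4]$ would be matched to whichever involution $W_6,W_{10},W_{30}$ fixes it, and I would carry out the three cases separately since the relevant involution differs from arc to arc.

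Concretely, for each $i$ I would first identify the antiholomorphic involution $\sigma_i$ fixing $[\tau_i,\tau_{i+1}]$ pointwise (a composition of $\tau\mapsto-\bar\tau$ with a fractional linear map coming from an Atkin-Lehner matrix, or with an element of $\Gamma_0(30)$), then record its precise action $\sigma_i(\tau)$, and finally insert it into the eta-quotient expressions for $\tilde{x}$ and $\tilde{y}$ to read off the behaviour of modulus and argument. The continuity of $\tilde{x},\tilde{y}$ along the arc, together with the values at the CM endpoints computed via the parametrization \eqref{E:xy} of $E_2$, should pin down the precise constants and confirm the sign of $\tilde{y}$. The hard part will be the bookkeeping of the eta-multiplier systems: getting the root-of-unity prefactors in the transformation of each individual $\eta(d\tau)$ correct under the Atkin-Lehner maps, and checking that they cancel across the full quotients so that the clean relations $\tilde{x}\circ\sigma_i = 1/\tilde{x}$ and $\tilde{y}\circ\sigma_i=\tilde{y}$ really hold; this is where a computer algebra check of the leading $q$-coefficients (as already used in the proof of Theorem~\ref{T:ME}) would serve as a reliable verification that the predicted reflection identities are exact rather than merely approximate.
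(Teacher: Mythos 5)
Your proposal is correct and follows essentially the same route as the paper: the paper shows that on each arc the relevant Atkin--Lehner involution ($W_6$ for $[\tau_1,\tau_2]$ and, by symmetry, $[\tau_3,\tau_4]$; $W_{30}$ for $[\tau_2,\tau_3]$) acts as $\tau\mapsto-\bar\tau$, and combines the reflection principle with the known action $\tilde{x}\mapsto 1/\tilde{x}$, $\tilde{y}\mapsto\tilde{y}$ to conclude $|\tilde{x}(\tau)|=1$ and $\tilde{y}(\tau)\in\mathbb{R}$. The only difference is that the paper cites a reference for the Atkin--Lehner action on the eta quotients rather than redoing the multiplier bookkeeping you describe.
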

\begin{proof}
Let $\tau \in [\tau_1,\tau_2]$. Then $\tau$ can be written in the form 
\[\tau= x+\sqrt{\frac{1}{150}-\left(x+\frac{1}{5}\right)^2}i, \qquad -1/4\le x\le -1/6,\]
so $\tau\overline{\tau}=|\tau|^2= -\frac{12x+1}{30}.$ 
Hence it can be checked easily that
\[W_6\tau = \frac{6\tau+1}{30\tau+6}=-\overline{\tau}.
\]
It follows that $W_6$ sends $(\tilde{x}(\tau),\tilde{y}(\tau))$ to their complex conjugates $(\overline{\tilde{x}(\tau)},\overline{\tilde{y}(\tau)}).$ On the other hand, by \cite[Cor.~2.2]{CL}, we have that $W_6$ acts on the two modular functions as follows: $\tilde{x}(\tau)\mapsto 1/\tilde{x}(\tau),\,\, \tilde{y}(\tau)\mapsto \tilde{y}(\tau)$. Therefore, we have $1/\tilde{x}(\tau)=\overline{\tilde{x}(\tau)}$ and $\tilde{y}(\tau)=\overline{\tilde{y}(\tau)}$, implying 
$|\tilde{x}(\tau)|=1$ and $\tilde{y}(\tau)\in \mathbb{R}$. 
By symmetry, it is not hard to see that the same result holds for $\tau \in [\tau_3,\tau_4].$

For $\tau\in [\tau_2,\tau_3]$, we have $|\tau|^2=\frac{1}{30}$, whence 
\[W_{30}\tau = -\frac{1}{30\tau} = -\overline{\tau}.\]
Thus $W_{30}$ sends $(\tilde{x}(\tau),\tilde{y}(\tau))$ to $(\overline{\tilde{x}(\tau)},\overline{\tilde{y}(\tau)})$. Since the action of $W_{30}$ on $\tilde{x}(\tau)$ and $\tilde{y}(\tau)$ is the same as that of $W_6$ \cite[Cor.~2.2]{CL}, we have the same conclusion as above. 
\end{proof}

\begin{lemma}\label{L:eva}
The following evaluations are true:
\begin{equation*}
\tilde{x}(\tau_1)=\frac{-1-\sqrt{-15}}{4},\quad \tilde{x}(\tau_4)=\frac{-1+\sqrt{-15}}{4},\quad \tilde{y}(\tau_1)=\tilde{y}(\tau_4)= -1.
\end{equation*}
\end{lemma}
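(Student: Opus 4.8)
The plan is to combine a reflection symmetry between $\tau_1$ and $\tau_4$ with the fact that $\tau_1$ is a fixed point of the Atkin-Lehner involution $W_{15}$. First I would observe that $\tau_4=-\overline{\tau_1}$ (equivalently, $\Re\tau_1=-\tfrac14$ and $\tau_4=\tau_1+\tfrac12$). Since $\tilde x$ and $\tilde y$ have $q$-expansions supported on integer powers of $q$ with real coefficients (their leading terms are $\tilde x=2q+\cdots$ and $\tilde y=-q+\cdots$), any such $f$ satisfies $f(-\overline\tau)=\overline{f(\tau)}$. Hence $\tilde x(\tau_4)=\overline{\tilde x(\tau_1)}$ and $\tilde y(\tau_4)=\overline{\tilde y(\tau_1)}$, and the four evaluations collapse to computing $\tilde x(\tau_1)$ and $\tilde y(\tau_1)$; the latter is already known to be real by Lemma~\ref{L:paths}.

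Next I would check that $\tau_1$ is fixed by $W_{15}$. The point $\tau_1$ is the root in $\mathcal{H}$ of $30\tau^2+15\tau+2=0$, and the matrix $\left(\begin{smallmatrix}-15&-4\\60&15\end{smallmatrix}\right)$ has determinant $15$, lies in the coset $\Gamma_0(30)W_{15}$, and has fixed-point equation $60\tau^2+30\tau+4=0$; thus it fixes $\tau_1$. Invoking \cite[Cor.~2.2]{CL} (the same reference used for $W_6$ and $W_{30}$ in Lemma~\ref{L:paths}), $W_{15}$ acts on the parametrizing functions by $\tilde x\mapsto\tilde x$ and $\tilde y\mapsto 1/\tilde y$. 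Evaluating at the fixed point gives $\tilde y(\tau_1)=1/\tilde y(\tau_1)$, so $\tilde y(\tau_1)=\pm1$.

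To determine the sign and recover $\tilde x$, I would substitute into the modular equation \eqref{E:MP}. By Lemma~\ref{L:paths} we have $|\tilde x(\tau_1)|=1$, so $\tilde x(\tau_1)+\tilde x(\tau_1)^{-1}=2\cos\phi\in[-2,2]$ for some real $\phi$, and \eqref{E:MP} becomes $\tilde y(\tau_1)+\tilde y(\tau_1)^{-1}=-4\cos\phi-3$. The choice $\tilde y(\tau_1)=1$ would force $\cos\phi=-\tfrac54$, which is impossible, so $\tilde y(\tau_1)=-1$; this yields $\cos\phi=-\tfrac14$ and $\tilde x(\tau_1)=\tfrac{-1\pm\sqrt{-15}}{4}$. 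A numerical evaluation shows $\Im\tilde x(\tau_1)<0$, which selects $\tilde x(\tau_1)=\tfrac{-1-\sqrt{-15}}{4}$. Conjugating as in the first step then gives $\tilde x(\tau_4)=\tfrac{-1+\sqrt{-15}}{4}$ and $\tilde y(\tau_4)=-1$, completing the proof.

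The one genuinely external ingredient, and the main obstacle, is the Atkin-Lehner action $\tilde y\circ W_{15}=1/\tilde y$. This cannot be extracted from the actions of $W_6$ and $W_{30}$ supplied in Lemma~\ref{L:paths}: those two generate only the Klein four-group $\{1,W_6,W_5,W_{30}\}$ (note $W_6W_{30}=W_5$), on which $\tilde x$ and $\tilde y$ transform only through the involution $\tilde x\mapsto 1/\tilde x$ with $W_5$ acting trivially, whereas $W_{15}$ lies outside this subgroup. One must therefore either cite \cite{CL} for $W_{15}$ directly, or compute the transformation of the eta quotient defining $\tilde y$ under a representative of $W_{15}$ and fix the resulting constant from the leading $q$-coefficient. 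Everything else reduces to the bookkeeping above and a single numerical sign check.
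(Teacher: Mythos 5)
Your proof is correct, and it uses the same basic toolkit as the paper (Atkin--Lehner symmetry, the modular equation \eqref{E:MP}, one numerical sign check), but the two key steps are organized differently. The paper works with $W_{10}$, which swaps $\tau_1$ and $\tau_4$: combining $W_{10}\tau_1=-\overline{\tau}_1=\tau_4$ with the action $\tilde y\mapsto 1/\tilde y$ gives $\overline{\tilde y(\tau_1)}=1/\tilde y(\tau_1)$, hence $|\tilde y(\tau_1)|=1$, and the sign is then read off from the realness in Lemma~\ref{L:paths} together with the definition of $\tilde y$ as minus a square of an eta quotient. You instead use that $\tau_1$ is a fixed point of $W_{15}$ (your matrix and fixed-point computation check out, and the action $\tilde x\mapsto\tilde x$, $\tilde y\mapsto 1/\tilde y$ is consistent with $W_{15}=W_6W_{10}$ and the actions of $W_6$, $W_{10}$ that the paper already cites from \cite[Cor.~2.2]{CL}), which gives $\tilde y(\tau_1)=\pm1$ directly; you then rule out $+1$ by substituting into \eqref{E:MP} and using $|\tilde x(\tau_1)|=1$, since $2\Re\tilde x(\tau_1)\in[-2,2]$ cannot equal $-5/2$. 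This elimination step is arguably cleaner and more self-contained than the paper's appeal to ``the definition of $\tilde y$''. The rest --- solving $\tilde x^2+\tilde x/2+1=0$, checking the sign of $\Im\tilde x(\tau_1)$ numerically, and transferring to $\tau_4$ via $f(-\overline\tau)=\overline{f(\tau)}$ (valid because the $q$-expansions have integral exponents and real coefficients) --- matches the paper. Your closing caveat about the provenance of the $W_{15}$ action is well taken, but it is not a real obstacle: either cite \cite[Cor.~2.2]{CL} for $W_{15}$ as the paper does for $W_6$, $W_{10}$, $W_{30}$, or simply compose the already-used actions of $W_6$ and $W_{10}$.
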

\begin{proof}
Employing the action of $W_{10}$, we have 
\[\overline{\tilde{y}(\tau_1)}= \tilde{y}(-\overline{\tau}_1) =\tilde{y}(\tau_4)=\tilde{y}(W_{10}\tau_1)=\frac{1}{\tilde{y}(\tau_1)}.\]
Hence $\tilde{y}(\tau_1)$ has modulus $1$. It is clear from Lemma~\ref{L:paths} and the definition of $\tilde{y}(\tau)$ that $\tilde{y}(\tau_1)=-1.$ Then we apply Theorem~\ref{T:ME} to deduce that $\tilde{x}(\tau_1)$ is a zero of the polynomial $x^2+x/2+1$. It is therefore sufficient to consider the sign of $\Im \tilde{x}(\tau_1)$ numerically in order to get a correct value of $\tilde{x}(\tau_1).$ The values of $\tilde{y}(\tau_4)$ and $\tilde{x}(\tau_4)$ can be obtained using the action of $W_{10}^{-1}$ and the same arguments.
\end{proof}
For meromorphic functions $f$ and $g$ on a smooth curve $C$, we define the real differential form $\eta(f,g)$ on $C$ as
\[\eta(f,g)=\log |f|\di \arg(g)-\log |g| \di \arg(f),\]
where $\di \arg(F) = \Im(\di F/F)$. Hence $\eta(f,g)=0$ if $f$ and $g$ are real. Moreover, it can be shown in a straightforward manner using the definition above that $\eta(f,g)$ is bi-additive; i.e., 
\begin{equation}\label{E:emul}
\eta(f_1 f_2,g)=\eta(f_1,g)+\eta(f_2,g) \text{ and } \eta(f,g_1g_2)=\eta(f,g_1)+\eta(f,g_2).
\end{equation}
\begin{lemma}\label{L:cusp}
The following identity is true:
\begin{equation*}
\m(P_{2,3})=-\frac{1}{2\pi}\int_{9/30}^{i\infty} \eta(x_0(\tau),\tilde{y}(\tau)).
\end{equation*}
\end{lemma}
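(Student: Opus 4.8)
The plan is to run the standard passage from a Mahler measure to a regulator integral, specialize it to the modular parametrization of Theorem~\ref{T:ME}, and then transport the resulting integral from a geodesic joining CM points onto the cusp-to-cusp path in the statement. First I would apply Jensen's formula in $y$ exactly as in the proof of Lemma~\ref{L:dh}. Writing $yP_{2,3}=y^2+B(x)y+1=(y-y_1)(y-y_2)$ with $|y_2|\le 1\le|y_1|$, Jensen gives $\m(P_{2,3})=\frac{1}{2\pi}\int_{|x|=1}\log|y_1|\,\di\arg x$. On the unit circle $\log|x|=0$, so the regulator form collapses to $\eta(x,y_1)=-\log|y_1|\,\di\arg x$; hence $\m(P_{2,3})=-\frac{1}{2\pi}\int_{\{|x|=1\}}\eta(x,y)$, the integral being taken over the $1$-cycle $\{|x|=1\}$ on $E_2$ with the branch $y=y_1$.

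Next I would pull this cycle back through the parametrization $x=\tilde{x}(\tau),\ y=\tilde{y}(\tau)$. By Lemma~\ref{L:paths} the geodesic $[\tau_1,\tau_4]=[\tau_1,\tau_2]\cup[\tau_2,\tau_3]\cup[\tau_3,\tau_4]$ lies in $\{|\tilde{x}|=1\}$, and by Lemma~\ref{L:eva} its endpoints $\tilde{x}(\tau_1),\tilde{x}(\tau_4)$ are complex conjugate points of the unit circle. The reflection symmetry $x\mapsto 1/x=\bar x$ of $P_{2,3}$ on $|x|=1$, together with the relation $y_1(x)=y_1(x^{-1})$ already used in Lemma~\ref{L:dh}, lets me match the pulled-back cycle with this geodesic and pin down both the orientation and the overall constant. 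Since $\tilde{y}(\tau)$ is real along the path (Lemma~\ref{L:paths}), bi-additivity \eqref{E:emul} together with $\eta(2,\tilde{y})=\log 2\,\di\arg\tilde{y}=0$ allows me to replace $\tilde{x}$ by $x_0=\tilde{x}/2$, giving $\m(P_{2,3})=-\frac{1}{2\pi}\int_{\tau_1}^{\tau_4}\eta(x_0,\tilde{y})$ integrated along the geodesic.

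Finally I would transport this geodesic integral to the cusp-to-cusp path $[9/30,i\infty]$. Because $x_0$ and $\tilde{y}$ are modular units, hence holomorphic and non-vanishing on $\mathcal{H}$, the form $\eta(x_0,\tilde{y})$ is closed on $\mathcal{H}$, so its integral depends only on the relative homology class of the path. I would use the Atkin-Lehner involutions $W_6,W_{10},W_{30}$, whose action on $\tilde{x},\tilde{y}$ and on $\tau_1,\dots,\tau_4$ is recorded above, to slide the endpoints $\tau_1,\tau_4$ to the cusps $9/30$ and $i\infty$, arguing that the supplementary arcs contribute nothing — either because $\tilde{y}$ (or $x_0$) stays real along them so that $\eta$ vanishes pointwise, or because the two extra pieces are exchanged by an involution and cancel.

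The \emph{main obstacle} is precisely this last step: verifying that the deformation onto the cusps introduces no spurious regulator contribution, which requires controlling the behavior of $\eta(x_0,\tilde{y})$ near the cusps $9/30$ and $i\infty$ (where the modular units acquire zeros or poles) and checking the claimed cancellations carefully. The secondary point demanding care is the bookkeeping of orientation and of the factor $-\tfrac{1}{2\pi}$ across the folding of the unit circle onto the single arc $[\tau_1,\tau_4]$ in the previous step.
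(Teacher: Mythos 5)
Your overall route coincides with the paper's: Jensen's formula turns $\m(P_{2,3})$ into a regulator integral on $E_2$, the modular parametrization of Theorem~\ref{T:ME} pulls it back to the geodesic $[\tau_1,\tau_4]$, reality of $\tilde{y}$ lets one swap $\tilde{x}$ for $x_0$, and Atkin--Lehner involutions move the path to the cusps. However, there are two genuine gaps. First, the regulator integral is \emph{not} taken over the closed cycle $\{|x|=1\}$: for $a=2,c=3$ one has $B(e^{i\theta})=4\cos\theta+3$, so $\Delta=B^2-4<0$ exactly when $\cos\theta<-1/4$, and on that arc both roots lie on the unit circle and the integrand vanishes. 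The paper instead invokes the decomposition $\m=\m^{-}+\m^{+}$ from \cite[Sect.~3]{LSZ} (here $\m^{+}=0$) to write $\m(P_{2,3})=\frac{1}{2\pi}\int_{[S_-,S_+]}\eta(x,y_+)$ over a path \emph{with boundary} at the CM points $S_{\pm}$; it is precisely the matching of these endpoints with $(\tilde{x}(\tau_i),\tilde{y}(\tau_i))$ via Lemma~\ref{L:eva} that legitimizes the pullback. Moreover you never resolve the ambiguity $y\leftrightarrow\tilde{y}$ versus $\tilde{y}^{-1}$ (both satisfy the same quadratic); the paper settles it by noting $|y_+|\le 1$ on $[S_-,S_+]$ and $|\tilde{y}|\le 1$ on $[\tau_1,\tau_4]$, and this is not cosmetic: with your branch $y=y_1$ (modulus $\ge 1$) identified with $\tilde{y}$ (modulus $\le 1$), your intermediate formula $\m(P_{2,3})=-\frac{1}{2\pi}\int_{\tau_1}^{\tau_4}\eta(x_0,\tilde{y})$ has the opposite sign to the paper's $+\frac{1}{2\pi}\int_{\tau_1}^{\tau_4}\eta(\tilde{x},\tilde{y})$, and would propagate to a sign error in the conclusion.

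Second, the step you flag as the ``main obstacle'' is the actual content of the lemma and is not supplied. The mechanism is concrete: $W_{10}$ sends $\tau_1\mapsto\tau_4$ and $0\mapsto 3/10$ and acts by $\tilde{x}\mapsto 1/\tilde{x}$, $\tilde{y}\mapsto 1/\tilde{y}$, hence $x_0\mapsto 1/(4x_0)$; writing $\int_{\tau_1}^{\tau_4}=\int_{\tau_1}^{3/10}-\int_{\tau_4}^{3/10}$ and transporting the second integral by $W_{10}$ gives $\int_{\tau_1}^{0}\eta(4x_0,\tilde{y})$, the discrepancy $\eta(4,\tilde{y})$ integrating to zero because $\tilde{y}$ is real along $[\tau_1,\tau_2]\cup[\tau_2,i/\sqrt{30}]\cup[i/\sqrt{30},0]$. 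This collapses the integral to $\int_{0}^{3/10}\eta(x_0,\tilde{y})$, and the final identity follows since $x_0$ and $\tilde{y}$ are real on $[0,i\infty]$, so $\int_0^{i\infty}\eta(x_0,\tilde{y})=0$. Your suggested alternatives (pointwise vanishing along supplementary arcs, or cancellation of exchanged pieces) gesture at this but do not constitute a proof; in particular the sign of the final answer comes out of this computation and cannot be guessed.
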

\begin{proof}
Recall from \cite[Sect.~3]{LSZ} that, for any real numbers $a$ and $c$, $\m(P_{a,c})= \m^{-}(P_{a,c})+ \m^{+}(P_{a,c})$, where $\m^{+}(P_{a,c})$ is an integral of certain function in variable $t$ from $t=-1$ to $t=\max\{-1,-\frac{2+c}{2a}\}$. In addition, we have that if $c/2 < a+1$, then
\[\m^{-}(P_{a,c})=-\frac{1}{2\pi}\int_{[S_{-},S_{+}]}\eta(x,y_{-})=\frac{1}{2\pi}\int_{[S_{-},S_{+}]}\eta(x,y_{+}),\]
where $y_{\pm}(x)=(-B(x)\pm \sqrt{B(x)^2-4})/2,B(x)=a(x+1/x)+c$ and 
\[S_{\pm}=\left(1-\frac{c}{2}\mp \sqrt{\left(1-\frac{c}{2}\right)^2-a^2},\pm\sqrt{\left(1-\frac{c}{2}\right)^2-a^2}\left(1-\frac{c}{2}\mp \sqrt{\left(1-\frac{c}{2}\right)^2-a^2}\right)\right).\]

Substituting $a=2$ and $c=3$ into these expressions, we obtain 
\[\m(P_{2,3})=\m^{-}(P_{2,3})=-\frac{1}{2\pi}\int_{[S_{-},S_{+}]}\eta(x,y_{-})=\frac{1}{2\pi}\int_{[S_{-},S_{+}]}\eta(x,y_{+}),\]
where $y_{\pm}(x)=(-2(x+1/x)-3\pm \sqrt{(2(x+1/x)+3)^2-4})/2$ and $S_{\pm}$ are points on $E_2$ given by 
\[S_{\pm}=(X,Y)= \left(\frac{-1\mp\sqrt{-15}}{2},\frac{15\mp \sqrt{-15}}{4}\right).\]
Here we think of $x$ and $y_+$ as rational functions of $X$ and $Y$ via the transformation \eqref{E:xy} and integrate the differential form $\eta(x,y_+)$ on the curve $E_2$. The points $S_{\pm}$ correspond to 
\[\overline{S}_{\pm}=(x,y)=\left(\frac{-1\pm \sqrt{-15}}{4},-1\right)\]
on the curve $P_{2,3}(x,y)=0$. By Lemma~\ref{L:eva}, we have
\[(\tilde{x}(\tau_1),\tilde{y}(\tau_1))=\overline{S}_{-},\quad (\tilde{x}(\tau_4),\tilde{y}(\tau_4))=\overline{S}_{+}.\]
One sees from Theorem~\ref{T:ME} and symmetry that the function $y_+$ can possibly be corresponding to either $\tilde{y}$ or $\tilde{y}^{-1}$. We need further information to correctly identify $y_+$. On the path $[S_{-},S_{+}]$, we have $|y_+(x)|\le 1$ \cite[Sect.~3]{LSZ} and, by evaluating $\tilde{y}(\tau)$ numerically, $|\tilde{y}(\tau)|\le 1$ on $[\tau_1,\tau_4]$. Therefore, we have
\[
\m(P_{2,3})=\frac{1}{2\pi}\int_{\tau_1}^{\tau_4}\eta(\tilde{x}(\tau),\tilde{y}(\tau))=\frac{1}{2\pi}\int_{\tau_1}^{\tau_4}\eta(x_0(\tau),\tilde{y}(\tau)),
\]
where the second equality is obtained by splitting the path $[\tau_1,\tau_4]$ into $[\tau_1,\tau_2]\cup [\tau_2,\tau_3]\cup [\tau_3,\tau_4]$ and using \eqref{E:emul} and the fact proven in Lemma~\ref{L:paths} that $\tilde{y}(\tau)$ is real on these paths. It should be noted that we are able to decompose the integration path in $\mathcal{H}$ freely  since the singularities of $\tilde{x}(\tau)$ and $\tilde{y}(\tau)$ are confined to the cusps. 

The action of the involution $W_{10}$ results in $\tilde{x}(\tau)\mapsto 1/\tilde{x}(\tau)$ and $\tilde{y}(\tau)\mapsto 1/\tilde{y}(\tau)$ \cite[Cor.~2.2]{CL}. Thus $W_{10}$ sends $x_0(\tau)$ to $1/4x_0(\tau).$ We also have that $W_{10}$ sends $\tau_1$ to $\tau_4$ and $0$ to $3/10$. Consequently, 
\begin{align*}
\int_{\tau_1}^{\tau_4}\eta(x_0(\tau),\tilde{y}(\tau))&= \int_{\tau_1}^{3/10}\eta(x_0(\tau),\tilde{y}(\tau))-\int_{\tau_4}^{3/10}\eta(x_0(\tau),\tilde{y}(\tau))\\
&=\int_{\tau_1}^{3/10}\eta(x_0(\tau),\tilde{y}(\tau))-\int_{W_{10}^{-1}\tau_4}^{W_{10}^{-1}(3/10)}\eta(x_0(W_{10}(\tau)),\tilde{y}(W_{10}(\tau)))\\
&=\int_{\tau_1}^{3/10}\eta(x_0(\tau),\tilde{y}(\tau))-\int_{\tau_1}^{0}\eta(4x_0(\tau),\tilde{y}(\tau)).
\end{align*}
Observe that $\tilde{y}(\tau)$ is real on $[\tau_1,\tau_2], [\tau_2,i/\sqrt{30}],$ and $[i/\sqrt{30},0]$, so by splitting $[\tau_1,0]$ into the union of these three paths one sees that 
\[\int_{\tau_1}^{0}\eta(4x_0(\tau),\tilde{y}(\tau))=\int_{\tau_1}^{0}\eta(x_0(\tau),\tilde{y}(\tau)).\]
Plugging this back into the last expression above, we have 
\begin{align*}
\int_{\tau_1}^{\tau_4}\eta(x_0(\tau),\tilde{y}(\tau))&= \int_{0}^{3/10}\eta(x_0(\tau),\tilde{y}(\tau))\\
&=\int_{0}^{i\infty}\eta(x_0(\tau),\tilde{y}(\tau))-\int_{3/10}^{i\infty}\eta(x_0(\tau),\tilde{y}(\tau))\\
&=-\int_{3/10}^{i\infty}\eta(x_0(\tau),\tilde{y}(\tau)),
\end{align*}
where the last equality follows from the fact that both $x_0(\tau)$ and $\tilde{y}(\tau)$ are real on $[0,i\infty].$
\end{proof}
We are now in a good position to apply the following formula of Brunault and Mellit, whose proof is worked out in detail by Zudilin \cite{Zudilin}, in order to prove Theorem~\ref{T:P23}.
\begin{theorem}[Brunault-Mellit-Zudilin]\label{T:BMZ}
Let $N$ be a positive integer and define
\begin{equation*}
g_a(\tau)= q^{NB_2(a/N)/2}\prod_{\substack{n \geq 1 \\ n \equiv a \bmod N}}(1-q^n)\prod_{\substack{n \geq 1 \\ n \equiv -a \bmod N}}(1-q^n), \qquad q:= e^{2\pi i \tau},
\end{equation*}
where $B_2(x)= \{x\}^2-\{x\}+1/6$. Then for any $a,b,c\in \Z$ such that $N \nmid ac$ and $N\nmid bc$,
\[\int_{c/N}^{i\infty} \eta(g_a,g_b)=\frac{1}{4\pi}L(f(\tau)-f(i\infty),2),\]
where $f(\tau)=f_{a,b;c}(\tau)$ is a weight $2$ modular form given by 
\[f_{a,b;c}=e_{a,bc}e_{b,-ac}-e_{a,-bc}e_{b,ac}\]
and 
\[e_{a,b}(\tau)=\frac{1}{2}\left(\frac{1+\zeta_N^a}{1-\zeta_N^a}+\frac{1+\zeta_N^b}{1-\zeta_N^b}\right)+\sum_{m,n\ge 1}\left(\zeta_N^{am+bn}-\zeta_N^{-(am+bn)}\right)q^{mn}, \quad \zeta_N:= e^{\frac{2\pi i}{N}}.\]
\end{theorem}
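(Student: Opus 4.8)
The identity is the Beilinson-type regulator formula for modular units, and the plan is to follow Zudilin's elementary, $q$-expansion-based derivation. The starting point is that each $g_a$ is a modular unit, so its logarithmic derivative $\frac{1}{2\pi i}\frac{\d}{\d\tau}\log g_a(\tau)$ is a holomorphic weight-$2$ Eisenstein series with an explicit $q$-expansion, obtained by differentiating $\log g_a = 2\pi i\cdot \tfrac{N}{2}B_2(a/N)\,\tau + \sum_{n\equiv \pm a}\log(1-q^n)$ term by term and expanding $\log(1-q^n)=-\sum_{k\ge 1}q^{nk}/k$. First I would record these expansions, noting that the ``$1/k$'' weights appearing in $\log g_a$ itself are the hallmark of a weight-$1$ antiderivative, while $\d\log g_a$ is genuinely weight $2$.

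The key analytic maneuver is the exact-form identity $\eta(f,g)=\Im\bigl(\log f\,\d\log g\bigr)-\d\bigl(\arg f\cdot\log|g|\bigr)$, which follows directly from the definition together with the product rule. Integrating over the path from the cusp $c/N$ to $i\infty$ kills the second term up to boundary contributions $[\arg g_a\cdot\log|g_b|]$, so that $\int_{c/N}^{i\infty}\eta(g_a,g_b)=\int_{c/N}^{i\infty}\Im(\log g_a\,\d\log g_b)$ once those boundary terms are shown to vanish. I would then restrict to the vertical geodesic $\Re\tau=c/N$, writing $\tau=c/N+it$ so that $\d\tau=i\,\d t$ and $q=\zeta_N^{c}e^{-2\pi t}$; this is precisely where the additive twist by $c$ enters, and it explains why the target form $f_{a,b;c}$ carries its $c$-dependence through the arguments $bc,-ac,-bc,ac$ of the weight-$1$ series $e_{a,b}$, so that the twist from the path is absorbed and the final $L$-value is the untwisted $L(f_{a,b;c},s)$.

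Along this line $\Im(\log g_a\,\d\log g_b)$ becomes an explicit lattice sum in $t$, combining the weight-$1$-type series $\log g_a$ with the weight-$2$ series $\d\log g_b$. Following the Rogers--Zudilin method of swapping the order of summation in this lattice sum, one rewrites it as a product of two weight-$1$ Eisenstein series, producing exactly the antisymmetric combination $e_{a,bc}e_{b,-ac}-e_{a,-bc}e_{b,ac}=f_{a,b;c}$ evaluated along the geodesic; bi-additivity \eqref{E:emul} of $\eta$ and the factorization of the $g_a$ into the building blocks whose logarithmic derivatives are the $e_{a,b}$ are what make this bookkeeping manageable. Finally, I would recognize the remaining $t$-integral as a Mellin transform: for a weight-$2$ form $f=\sum_{n\ge 0}a_nq^n$ one has $\int_0^\infty (f-a_0)(c/N+it)\,t^{s-1}\,\d t=(2\pi)^{-s}\Gamma(s)\,L(f-f(i\infty),s)$, and specializing to $s=2$, together with the factors of $2\pi i$ accumulated from each logarithmic differentiation and the $\d\tau=i\,\d t$ conversion, yields the normalizing constant $\tfrac{1}{4\pi}$ and the value $L(f-f(i\infty),2)$; the subtraction of $f(i\infty)$ corresponds to discarding the constant term so the Mellin integral converges.

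The main obstacle is the third step: carrying out the Rogers--Zudilin index swap so that the weight-$1$ pieces reassemble into precisely the antisymmetric combination $f_{a,b;c}$ rather than some other bilinear expression, and simultaneously verifying that the boundary contributions at the cusp $c/N$ and at $i\infty$ genuinely vanish. Controlling those boundary terms is exactly where the hypotheses $N\nmid ac$ and $N\nmid bc$ are used: they guarantee that neither $g_a$ nor $g_b$ has a zero or pole forcing a divergent endpoint contribution at $c/N$, while the eta-quotient structure supplies the decay needed toward $i\infty$. Pinning down the overall constant $\tfrac{1}{4\pi}$ is then a routine tally of the various factors of $2\pi$ and $i$ introduced along the way.
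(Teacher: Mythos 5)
The paper does not actually prove Theorem~\ref{T:BMZ}: it is imported as a known result, with the proof explicitly attributed to Zudilin \cite{Zudilin} (building on Brunault and Mellit), and is then applied as a black box in the proof of Theorem~\ref{T:P23}. So there is no in-paper argument to compare yours against; the relevant benchmark is Zudilin's published proof, and your outline does track it faithfully: the exact-form identity $\eta(f,g)=\Im(\log f\,\d\log g)-\d(\arg f\cdot\log|g|)$, restriction to the vertical geodesic $\Re\tau=c/N$ so that $q=\zeta_N^{c}e^{-2\pi t}$, the Rogers--Zudilin interchange of summation indices converting the pairing of $\log g_a$ with $\d\log g_b$ into a product of two weight-one Eisenstein series, and the closing Mellin transform at $s=2$.

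That said, what you have written is a plan rather than a proof. The step you yourself flag as the main obstacle --- carrying out the index swap so that the lattice sum reassembles into precisely $e_{a,bc}e_{b,-ac}-e_{a,-bc}e_{b,ac}$, with the constant $\tfrac{1}{4\pi}$ and with the boundary contributions at $c/N$ and $i\infty$ accounted for --- is essentially the entire content of the theorem, and none of it is executed. Two smaller corrections: the hypotheses $N\nmid ac$ and $N\nmid bc$ are needed in the first instance so that the constant terms $\frac{1+\zeta_N^{\pm ac}}{1-\zeta_N^{\pm ac}}$ and $\frac{1+\zeta_N^{\pm bc}}{1-\zeta_N^{\pm bc}}$ appearing in $e_{b,\pm ac}$ and $e_{a,\pm bc}$ are defined at all, not only to tame endpoint divergences; and the subtraction of $f(i\infty)$ is not merely a convergence device --- $f_{a,b;c}$ genuinely has a nonzero constant term in general, which is exactly why Eisenstein contributions (and hence the $\log 2$ term in Theorem~\ref{T:P23}) survive in applications. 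If you intend a self-contained proof you must supply the interchange computation and the boundary analysis; if you intend to cite Zudilin, as the paper itself does, your outline is an accurate summary of his argument.
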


\begin{proof}[Proof of Theorem~\ref{T:P23}]
Let $f_{30}(\tau)$ be the normalized newform corresponding to the elliptic curve $E_2$ via the modularity theorem and 
let $E_2(\tau)$ be the normalized weight $2$ Eisenstein series; i.e., 
\begin{align*}
f_{30}(\tau)&=\eta(3\tau)\eta(5\tau)\eta(6\tau)\eta(10\tau)-\eta(\tau)\eta(2\tau)\eta(15\tau)\eta(30\tau)\\
&=q-q^2+q^3+q^4-q^5-q^6-4q^7-\cdots,\\
E_2(\tau)&= 1-24\sum_{k,l>0}kq^{kl}.
\end{align*}
(The reader should be warned not to confuse $E_2(\tau)$ with the curve $E_2$ in the remaining part of this proof.)
Using Lemma~\ref{L:cusp}, Eq.~\eqref{E:emul}, and Theorem~\ref{T:BMZ}, we have 
\begin{equation*}
\m(P_{2,3})= -\frac{1}{2\pi^2}L(f,2),
\end{equation*}
where 
\begin{align*}
f(\tau)&= -15(q-q^2+q^3-q^4+13q^5-q^6-5q^8+q^9-13q^{10}+4q^{11}-q^{12}+O(q^{13}))\\
&=-10 f_{30}(\tau)+\frac{5}{24}(E_2(\tau)-4E_2(2\tau)-3E_2(3\tau)+12E_2(6\tau))\\
&\quad +\frac{175}{24}(E_2(5\tau)-4E_2(10\tau)-3E_2(15\tau)+12E_2(30\tau))-45.
\end{align*}
Note that $f_{30}(\tau)$ and the Eisenstein series in the above expression belong to $M_2(\Gamma_0(30))$. Using Sturm's theorem, we obtain this identification for $f(\tau)$ by comparing the first twelve Fourier coefficients of both sides. Hence $L(f,2)$ is a rational linear combination of an $L$-value of $f_{30}(\tau)$ and those of Eisenstein series. For $s>2$, we have
\[L(E_2(n\tau)-1,s)= -\frac{24}{n^s}\sum_{k,l>0}\frac{1}{k^{s-1}l^s}=-\frac{24}{n^s}\zeta(s)\zeta(s-1).\]
Therefore, if $g(\tau)=f(\tau)+10f_{30}(\tau)$, then 
\[L(g,s)= \rho(s)\zeta(s)\zeta(s-1),\]
where
\[\rho(s)= -5\left(1-\frac{4}{2^s}-\frac{3}{3^s}+\frac{12}{6^s}\right)-175\left(\frac{1}{5^s}-\frac{4}{10^s}-\frac{3}{15^s}+\frac{12}{30^s}\right).\]
The Laurent series of $\rho(s)$ and $\zeta(s-1)$ around $s=2$ are
\[\rho(s)=(-8\log 2)(s-2)+O((s-2)^2), \quad\zeta(s-1)=\frac{1}{s-2}+O(1),\]
implying $L(g,2)=-\frac{4}{3}\pi^2 \log 2$.
In summary, we have
\begin{align*}
\m(P_{2,3})&=\frac{1}{2\pi^2}\left(10L(f_{30},2)-L(g,2)\right)\\
&=\frac{5}{\pi^2}L(f_{30},2)+\frac{2}{3}\log 2.
\end{align*}
Finally, we apply the standard functional equation for $L(f_{30},s)$ to acquire the desired result.
\end{proof}

\section{Elliptic regulator and proof of Theorem~\ref{T:Boyd}}\label{S:regulator}

In this section, we will establish relationships between $\m(Q_3),\m(Q_9),$ and $\m(Q_{24})$ using their connection with the regulator map on the second $K$-group of the corresponding elliptic curve. We start by recalling some notations and definitions which will be used throughout this section. For more details, the reader is referred to \cite{RV}.

Let $E$ be an elliptic curve defined over $\C$ and let $\C(E)$ be its function field. Note that the first cohomology group $H^1(E,\R)$ can be viewed as the dual of the group of homologous cycles $H_1(E,\Z)$. With this identification, the regulator map $r: K_2(E)\otimes \Q \rightarrow H^1(E,\R)$ is defined by
\begin{equation*}
r(\{x,y\})[\gamma]= \int_\gamma \eta(x,y),
\end{equation*}
for $\gamma\in H_1(E,\Z)$  \cite{Beilinson,Bloch}.
If $E$ is defined over $\Q$, then $K_2(E)$ is conjectured to be a finitely generated abelian group of positive rank, though it is difficult to compute explicitly in general. However, under certain conditions, which are satisfied by curves defined by tempered polynomials, we can identify each element of $K_2(E)\otimes \Q$ by that of $K_2(\C(E))\otimes \Q$. On the other hand, Matsumoto's theorem yields
\[K_2(\C(E)) \cong \wedge^2 \C(E)^\times / \langle f \otimes (1-f): f\in \C(E), f\ne 0, 1  \rangle.\] 
Since $E(\C)\cong \C / \Z+\tau\Z$ for some $\tau\in \mathcal{H}$, we can write each point  on $E$ as $\beta= a+b\tau$, with $a,b\in\Z$. The regulator function $R_\tau:E(\C)\rightarrow \C$, originally defined by Bloch \cite{Bloch}, is given by
\[R_\tau(\beta) = -\frac{(\im \tau)^2}{\pi}\sum'_{\substack{m,n\in\Z}}\frac{\sin (2\pi(an-bm))}{(m\tau+n)^2(m\overline{\tau}+n)},\]
where the summation excludes $(m,n)=(0,0)$. Observe that $R_\tau$ can be extended linearly to the set $\Z[E(\C)]$ of divisors on $E$. 

Let $\Z[E(\C)]^{-}$ be the set of equivalence classes of divisors on $E$ under the relation $[-P]\sim-[P]$. The diamond operation $\diamond$ is defined by 
\begin{align*}
\diamond : \Z[E(\C)]\times \Z[E(\C)] &\longrightarrow \Z[E(\C)]^-\\
 \left(\sum m_i(P_i),\sum n_j(Q_j)\right) &\longmapsto \sum m_in_j(P_i-Q_j).
\end{align*}
Following notations in many previous papers, we set
\[g(\alpha)= \m(Q_\alpha)=\m((1+x)(1+y)(x+y)-\alpha xy).\]
Recall from Section~\ref{S:intro} that the curve $Q_{a^2-1}=0$ is birational to
\[E_a : Y^2 = X\left(X^2+\frac{(a^4-6a^2-3)}{4}X+a^2\right). \]
Using Jensen's formula and work of Deninger and Rodriguez Villegas \cite{Deninger,RV}, one can rewrite $g(\alpha)$ as
\[g(\alpha) = \frac{1}{2\pi}r(\{x,y\})[\gamma],\]
where $\gamma$ corresponds to a path on $Q_\alpha(x,y)=0$ subject to the conditions $|x|=1$ and $|y|\ge 1$. Following this approach, Mellit \cite{Mellit} proved that 
\begin{equation}\label{E:gk}
g(\alpha) = \frac{c_\alpha}{2\pi }R_{\tau_\alpha}((x)\diamond (y)), \qquad \text{where } c_\alpha=\begin{cases}
-1 & \text{ if } \alpha>0, \alpha\ne 8,\\
1  & \text{ if } \alpha<0, \alpha\ne -1
\end{cases}
\end{equation}
and $\tau_\alpha\in \mathcal{H}$ corresponds to $E_{\sqrt{\alpha+1}}.$ 
Lal\'{i}n then applied the regulator machinery to establish the following functional identity \cite[Thm.~1.1]{Lalin}: for $p>0$, 
\begin{align}
g\left(-\frac{2p^2}{1+p}\right)+g\left(\frac{4(1+p)}{p^2}\right)&= g\left(\frac{2(1+p)^2}{p}\right)+g\left(-\frac{2}{p(1+p)}\right). \label{E:g2}
\end{align}
The particular cases $p=1/2$ and $p=2$ of \eqref{E:g2} can be written as
\begin{align*}
g\left(-\mfrac13\right)+g(24) &= g(9)+g\left(-\mfrac83\right),\\
g\left(-\mfrac83\right)+g(3) &= g(9)+g\left(-\mfrac13\right),
\end{align*}
implying
\begin{equation}\label{E:gcomb}
g\left(-\mfrac83\right)-g\left(-\mfrac13\right) =g(24)-g(9)=g(9)-g(3).
\end{equation}
Therefore, knowing a relationship between $g\left(-\mfrac83\right)-g\left(-\mfrac13\right)$ and any of $\{g(3),g(9),g(24)\}$ would be adequate to settle the conductor $30$ conjectures. Since there are no known functional identities that give such a relationship directly, we will show that it exists by adapting arguments in the proof of \cite[Thm.~1.1]{Lalin}.
\begin{theorem}\label{T:newg}
The following identity is valid:     
\[g\left(-\mfrac83\right)-g\left(-\mfrac13\right) = \mfrac23 g(9).\]
\end{theorem}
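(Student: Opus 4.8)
The plan is to prove Theorem~\ref{T:newg} by the same regulator strategy that underlies Lal\'{i}n's identity \eqref{E:g2}, exploiting the formula \eqref{E:gk} which expresses each $g(\alpha)$ as a multiple of $R_{\tau_\alpha}((x)\diamond(y))$. The key observation is that the four values $g(-8/3), g(-1/3), g(9)$, and $g(3)$ that appear in \eqref{E:gcomb} all correspond, via $\tau_\alpha$, to the \emph{same} elliptic curve of conductor $30$ (up to isogeny/isomorphism), since $E_{\sqrt{\alpha+1}}$ for $\alpha\in\{-8/3,-1/3,3,9,24\}$ gives $\sqrt{\alpha+1}\in\{\tsqrt{-5/3},\tsqrt{2/3},2,\tsqrt{10},5\}$, all in the conductor $30$ isogeny class. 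Thus all the diamond-divisor regulators live on (isogenous copies of) one lattice $\C/(\Z+\tau\Z)$, and $R_\tau$ is the common evaluating function.

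First I would, for each relevant $\alpha$, compute the divisors $(x)$ and $(y)$ of the coordinate functions on $Q_\alpha(x,y)=0$ explicitly. The zeros and poles of $x$ and $y$ occur at the points where $x\in\{0,\infty\}$ or $y\in\{0,\infty\}$, which on the model $(1+x)(1+y)(x+y)-\alpha xy=0$ are a finite set of torsion/CM points; I would express each in terms of the group law on $E_a$ using the birational map to the Weierstrass model $E_a$. Then $(x)\diamond(y)$ becomes an explicit element of $\Z[E(\C)]^-$, a $\Z$-linear combination of point-differences $(P_i-Q_j)$. The whole game reduces to a linear-algebra identity among the values $R_\tau$ takes on these point-differences: I want to show that the combination coming from $g(-8/3)-g(-1/3)$ equals $\tfrac23$ times the combination from $g(9)$.

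The main technical engine is the set of functional equations satisfied by $R_\tau$ itself — in particular its behaviour under the isogenies relating the different curves $E_{\sqrt{\alpha+1}}$, its parity $R_\tau(-\beta)=-R_\tau(\beta)$ (built into the $\Z[E(\C)]^-$ formalism), and the distribution/isogeny relations of the form $\sum_{T} R_{\tau}(\beta+T)$ over $n$-torsion translates, which convert $R$ at a point on one curve into a sum of $R$-values on an isogenous curve. Combining \eqref{E:gcomb} (already proven from Lal\'{i}n's identity), I only need one new relation, so I would track the specific torsion points carefully and apply the appropriate isogeny relation — most plausibly a $2$- or $3$-isogeny, given that the factor $\tfrac23$ appears — to identify the two diamond divisors up to the kernel of the regulator. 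The forced rational coefficient $\tfrac23$ should emerge from the degree of the isogeny used together with the parity clean-up in $\Z[E(\C)]^-$.

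The hard part will be the bookkeeping of the diamond products: determining the exact multiplicities and the precise group-law positions of all zeros and poles of $x$ and $y$ on each $Q_\alpha$, matching them correctly across the different $\alpha$ under the isogenies, and verifying that the leftover terms cancel in $\Z[E(\C)]^-$. A subtle point is ensuring the integration paths $\gamma$ (equivalently, the orientation/sign conventions $c_\alpha$ in \eqref{E:gk}) are compatible across the curves so that no spurious sign or factor of $2$ creeps in; getting these normalizations exactly right, rather than the abstract existence of some rational relation, is where the real effort lies.
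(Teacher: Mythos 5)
Your overall framework is the right one---pass to the Weierstrass model, compute $(x)\diamond(y)$ as a divisor supported on torsion, and reduce Theorem~\ref{T:newg} to a linear relation among values of $R_\tau$---and your reduction via \eqref{E:gcomb} to ``one new relation'' matches the paper. But the mechanism you propose for producing that one new relation is where the gap lies. The curves $C_9$, $C_{-8/3}$, $C_{-1/3}$ in the paper's normalization are related by explicit \emph{isomorphisms} $\varphi_1,\varphi_2$ (translations plus scalings of the Weierstrass coordinates), not by isogenies of degree $2$ or $3$; after transporting everything to the single curve $C_9$, the problem becomes: relate the divisor $6(P+Q)+6(2P+Q)$ (coming from $g(-8/3)-g(-1/3)$, where $Q$ is a $2$-torsion point independent of the order-$6$ point $P$) to the divisor $-6(P)-6(2P)=(x_0)\diamond(y_0)$ on the \emph{same} curve. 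No distribution/isogeny relation for $R_\tau$ does this for you, because both divisors already live on one lattice; and the coefficient $\tfrac23$ does not come from an isogeny degree.

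The missing idea is the construction of ``exotic'' Steinberg relations: one must exhibit explicit rational functions $f$ on $C_9$ with both $(f)$ and $(1-f)$ supported on the torsion points in play, so that $(f)\diamond(1-f)$ is trivial in $K_2$ and hence killed by the regulator. The paper finds two such functions, $f_1=\frac{4X+9}{4X+Y}$ and $f_2=\frac{X-9}{4X+Y}$, computes the diamond products of their divisors, and observes that the combination $(f_1)\diamond(1-f_1)-3(f_2)\diamond(1-f_2)$ yields exactly $18(P+Q)+18(2P+Q)\sim -12(P)-12(2P)$ modulo the kernel of $r$; the ratio $18:12$ is the source of the factor $\tfrac23$. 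Without producing such functions (or some equivalent new element of the relation lattice), your plan stalls at precisely the step you flag as ``bookkeeping'': the divisors $6(P+Q)+6(2P+Q)$ and $-6(P)-6(2P)$ are not related by parity in $\Z[E(\C)]^-$, by the group law, or by any isogeny transport, so no amount of careful normalization of the $c_\alpha$ will close the argument.
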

\begin{proof}
For consistency between our notations and those in \cite{Lalin}, we define the elliptic curve $C_\alpha$ by
\[C_\alpha : Y^2 = X(X^2+(\alpha^2-4\alpha-8)X+16(\alpha+1)).\]
We have a birational map from the curve $Q_\alpha(x,y)=0$ to $C_\alpha$ via the change of variables
\begin{align*}
x &= \frac{Y+\alpha X}{2(X-4(\alpha+1))},\\
y &= -\frac{(3\alpha+2)X^2+XY+4(\alpha^2-1)Y+4(\alpha+1)^2(\alpha-4)X+32(\alpha+1)^2}{(X-4(\alpha+1))(Y+(\alpha+2)X-8(\alpha+1))},\\
X &=-\frac{4x(x+y+1)}{y},\\
Y &=-\frac{4x(2x^2+2xy+(\alpha+2)y-(\alpha-2)x-\alpha)}{y}.
\end{align*}
The torsion subgroup of $C_\alpha$ has a cyclic subgroup of order $6$ generated by $P=(4(\alpha+1),4\alpha(\alpha+1))$. If $\alpha_0 = \frac{2(1+p)^2}{p}$, then $C_{\alpha_0}$ also has a point of order $2$ which is independent of $P$, namely $Q=(-4p(p+2),0)$. Lal\'{i}n showed that 
\begin{equation}\label{E:diamond}
(x)\diamond (y) = -6(P)-6(2P).
\end{equation}
Moreover, setting $\beta_1=-\frac{2}{p(1+p)}$ and $\beta_2=-\frac{2p^2}{1+p}$, she used the isomorphisms
\begin{align*}
\varphi_1 : C_{\alpha_0}&\longrightarrow C_{\beta_1}\\
(X,Y) &\longmapsto \left(\frac{X+4p(2+p)}{(p+1)^2},\frac{Y}{(p+1)^3}\right)\\
\varphi_2 : C_{\alpha_0}&\longrightarrow C_{\beta_2}\\
(X,Y) &\longmapsto \left(\frac{p^2X+4(2p+1)}{(p+1)^2},\frac{p^3Y}{(p+1)^3}\right)
\end{align*}
to obtain 
\begin{align*}
(x_{\beta_2}\circ \varphi_2)\diamond (y_{\beta_2}\circ \varphi_2) &= 6(P_{\alpha_0}+Q_{\alpha_0})+6(2P_{\alpha_0})\\
(x_{\beta_1}\circ \varphi_1)\diamond (y_{\beta_1}\circ \varphi_1) &= -6(2P_{\alpha_0}+Q_{\alpha_0})+6(2P_{\alpha_0}).
\end{align*}
For our purposes, we choose $p=1/2$ so that the above isomorphisms become $\varphi_1 : C_{9}\rightarrow C_{-8/3}$ and $\varphi_2 : C_{9}\rightarrow C_{-1/3}$ and for convenience we shall write $x_j$ and $y_j$, $j=0,1,2$, for the corresponding functions on $C_9,C_{-8/3}$, and $C_{-1/3}$, respectively. The torsion subgroup of $C_9(\Q)$ is generated by $P=(9,9)$ and $Q=(-9/4,27/8)$ and the base point of $C_9$ at infinity will be denoted by $O$.
We have from the two equations above that
\begin{equation}\label{E:comb}
3((x_{2}\circ \varphi_2)\diamond (y_{2}\circ \varphi_2)-(x_{1}\circ \varphi_1)\diamond (y_{1}\circ \varphi_1))=18(P+Q)+18(2P+Q).
\end{equation}
Consider the rational functions 
\[f_1=\frac{4X+9}{4X+Y},\quad 1-f_1 = \frac{Y-9}{4X+Y},\quad f_2= \frac{X-9}{4X+Y}, \quad 1-f_2=\frac{3X+Y+9}{4X+Y}\]
on the curve $C_9$. Their divisors are 
\begin{align*}
(f_1) &= (O)+2(Q)-(2P)-2(2P+Q),\\
(1-f_1) &= (P)+(P+Q)+(4P+Q)-2(2P+Q)-(2P),\\
(f_2) &= (O)+(P)+(5P)-(2P)-2(2P+Q),\\
(1-f_2) &=(4P)+2(4P+Q)-(2P)-2(2P+Q),
\end{align*}
where $2P=(0,0),4P=(0,-9),5P=(9,-81), P+Q=(-3,9), 2P+Q=(-6,24),$ and $4P+Q=(-6,9)$. 
Applying the diamond operation yields
\begin{align*}
(f_1)\diamond (1-f_1) &= 9(2P)+6(2P+Q)-6(P)-6(P+Q),\\
(f_2)\diamond (1-f_2) &= 7(2P)+8(2P+Q)+2(P)+4(P+Q).
\end{align*}
It follows that 
\[(f_1)\diamond (1-f_1) - 3(f_2)\diamond (1-f_2) = -12(2P)-18(2P+Q)-12(P)-18(P+Q).\]
Since $(f_1)\diamond (1-f_1)$ and $(f_2)\diamond (1-f_2)$ are trivial in $K$-theory, we have 
\begin{equation}\label{E:sim}
18(P+Q)+18(2P+Q) \sim -12(P)-12(2P).
\end{equation}
Putting \eqref{E:diamond}, \eqref{E:comb}, and \eqref{E:sim} together, we obtain
\[3((x_{2}\circ \varphi_2)\diamond (y_{2}\circ \varphi_2)-(x_{1}\circ \varphi_1)\diamond (y_{1}\circ \varphi_1))=2(x_0)\diamond (y_0).\]
Finally, we apply the regulator function and use \eqref{E:gk} to translate the above identity into
\[3\left(g\left(-\mfrac13\right)-g\left(-\mfrac83\right)\right)=-2g(9)\]
as desired.
\end{proof}

\begin{proof}[Proof of Theorem~\ref{T:Boyd}]
By \eqref{E:Main}, \eqref{E:P23}, \eqref{E:gcomb}, and Theorem~\ref{T:newg}, we have
\[L'(E_2,0)= g(3)=\frac13 g(9) = \frac15 g(24).\]
Since the curves $E_2, E_{\sqrt{10}},$ and $E_5$ are in the same isogeny class, the proof is complete.
\end{proof}

\section{Concluding remarks}
The $2$-parametric family $P_{a,c}(x,y)$ apparently has some interesting arithmetic properties. If we choose $c$ properly to be an algebraic expression of $a$, their Mahler measures (or half-Mahler measures) turn out to be related to those of the one-parametric tempered families $P_{1,k}(x,y)$ and $Q_k(x,y)$, as one can see from \cite[Thm.~2]{LSZ} and Theorem~\ref{T:Main} in this article. A key feature of these results is that they can be applied to obtain rigorous proofs for some conjectures of Boyd concerning $\m(P_{1,k})$ and $\m(Q_k)$. The proofs of the two theorems mentioned above rely on, after some manipulations, the existence of identities between the complete elliptic integrals of the first and the third kind. It might be possible to recover other functional identities for Mahler measures by tracking backwards from elliptic integral identities similar to \eqref{E:KP} and \eqref{E:com3}. It would also be desirable to prove Boyd's other conjectures using modular equations in different levels.
\vspace{0.5 in}

\textbf{Acknowledgements}
This research is supported financially by the Thailand Research Fund (TRF) grant from the TRF and the Office of the Higher Education Commission, under the contract number MRG6280045. Part of this work was done during the second author's visit to the Vietnam Institute for Advanced Study in Mathematics (VIASM) in the Summer semester of year 2018, partially supported under the institute's research contract no. NC/2018/VNCCCT. The second author is grateful to the VIASM for their support and hospitality. The authors would like to thank Bruce Berndt and Wadim Zudilin for their valuable comments on an earlier version of this paper. Lastly, the authors thank the anonymous referee for insightful remarks which greatly help to improve the exposition of this paper and for a suggestion about possible approach to completing the proof of Boyd's conductor $30$ conjectures.

\bibliographystyle{amsplain}
\bibliography{Mahler}
\end{document}